\def\vertex{\circle*{1.8}}
\def\ball{\circle{8}}
\newtheorem{theorem}{Theorem}[section]
\newtheorem{lemma}[theorem]{Lemma}
\newenvironment{proof}[1][]%
{\noindent {\setcounter{equation}{0}\it Proof.
}{#1}{}}{\hfill$\Box$\vspace{2ex}}
\def\longbox#1{\parbox{0.85\textwidth}{#1}}
\begin{document}

\title{$4$-coloring $(P_6,\mbox{bull})$-free graphs}

\author{Fr\'ed\'eric Maffray\corref{cor2}\fnref{STINT}}

\author{Lucas Pastor\corref{cor1}\fnref{STINT}}

\cortext[cor2]{CNRS.}
\cortext[cor1]{Corresponding author.}

\fntext[STINT]{The authors are partially supported by ANR project
STINT (ANR-13-BS02-0007).}

\address{Laboratoire G-SCOP, Universit\'e Grenoble-Alpes, Grenoble,
France}

\date{\today}

\begin{frontmatter}
\begin{abstract}
We present a polynomial-time algorithm that determines whether a graph
that contains no induced path on six vertices and no bull (the graph
with vertices $a,b,c,d,e$ and edges $ab,bc,cd,be,ce$) is
$4$-colorable.  We also show that for any fixed $k$ the $k$-coloring
problem can be solved in polynomial time in the class of
$(P_6,\mbox{bull},\mbox{gem})$-free graphs.

\end{abstract}
\begin{keyword} 
Coloring \sep $P_6$-free \sep bull-free \sep polynomial time \sep
algorithm
\end{keyword}

\end{frontmatter}

\section{Introduction}

For any integer $k$, a \emph{$k$-coloring} of a graph $G$ is a mapping
$c:V(G)\rightarrow\{1,\ldots,k\}$ such that any two adjacent vertices
$u,v$ in $G$ satisfy $c(u)\neq c(v)$.  A graph is \emph{$k$-colorable}
if it admits a $k$-coloring.  The \emph{chromatic number} $\chi(G)$ of
a graph $G$ is the smallest integer $k$ such that $G$ is
$k$-colorable.  Determining whether a graph is $k$-colorable is
NP-complete for each fixed $k\ge 3$ \cite{Karp,GJS}.

For any integer $\ell$ we let $P_\ell$ denote the path on $\ell$
vertices and $C_\ell$ denote the cycle on $\ell$ vertices.  Given a
family of graphs ${\cal F}$, a graph $G$ is \emph{${\cal F}$-free} if
no induced subgraph of $G$ is isomorphic to a member of ${\cal F}$;
when ${\cal F}$ has only one element $F$ we say that $G$ is $F$-free.
Recently several authors have considered the following question: {\it
What is the complexity of determining whether a $P_\ell$-free graph is
$k$-colorable?} Here is a summary of what is known so far (for more
details see \cite{BGPS,CMSZ,Huang}).
\begin{itemize}
     \itemsep=0em
\item
For $\ell\le 4$ the problem is solved by the fact that the chromatic
number of any $P_4$-free graph can be computed in polynomial time
\cite{CLS}.
\item
For $\ell=5$, Ho\`ang et al.~\cite{HKLSS} proved that the problem of
$k$-coloring $P_5$-free graphs is polynomially solvable for every
fixed $k$.
\item
For $k=3$, there are polynomial-time algorithms for $3$-coloring
$P_6$-free graphs due to Randerath and Schiermeyer \cite{RS} and later
Broersma~et~al.~\cite{BFGP}, and more recently for $3$-coloring
$P_7$-free graphs due to Chudnovsky et al.~\cite{CMZ1,CMZ2}.
\item
On the other hand, Huang \cite{Huang} showed that the problem is
NP-complete when either $k\ge 4$ and $\ell\ge 7$ or $k\ge 5$ and
$\ell\ge 6$.
\end{itemize}
Hence the cases whose complexity status is still unknown are when
$k=3$ and $\ell\ge 8$ and when $k=4$ and $\ell=6$.  For $k=4$ and
$\ell=6$, Chudnovsky et al.~\cite{CMSZ} gave a polynomial-time
algorithm that decides if a $(P_6,C_5)$-free graph is $4$-colorable;
Huang~\cite{Huang} gave a polynomial-time algorithm that decides if a
$(P_6,\text{banner})$-free graph is $4$-colorable (where the banner is
the graph that consists of a $C_4$ plus a vertex with one neighbor in
the $C_4$); and Brause et al.~\cite{Brause} gave a polynomial-time
algorithm that determines the $4$-colorability of $(P_6,\mbox{bull},
Z_1)$-free graphs and $(P_6,\mbox{bull}, \mbox{kite})$-free graphs,
where the \emph{bull} is the graph with vertices $a,b,c,d,e$ and edges
$ab,bc,cd,be,ce$ (see Figure~\ref{fig:bg}), and $Z_1$ and the kite are
two other graphs on five vertices.

We will generalize the latter to all
$(P_6,\mbox{bull})$-free graphs.  The \emph{gem} is the graph with
vertices $v_1, \ldots, v_5$ and edges $v_1v_2$, $v_2v_3$, $v_3v_4$ and
$v_5v_i$ for all $i\in\{1,2,3,4\}$ (see Figure~\ref{fig:bg}).  Our
main results are the following.

\begin{theorem}\label{thm:main}
There is a polynomial time algorithm that determines whether a
$(P_6,\mbox{bull})$-free graph $G$ is $4$-colorable, and if it is,
produces a $4$-coloring of $G$.
\end{theorem}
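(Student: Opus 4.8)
\medskip\noindent\textit{Proof plan.}
The plan is to reduce to a single structured case by a dichotomy on whether $G$ contains an induced $C_5$, and then to turn $4$-colorability into a polynomially solvable problem. First I would make the routine simplifications: we may assume $G$ is connected (otherwise process each component), and we may test in polynomial time whether $\omega(G)\ge 5$, reporting that $G$ is not $4$-colorable in that case, so from now on $\omega(G)\le 4$. I would also dispose of clique cutsets: if $G$ has a clique cutset $K$, writing $G=G_1\cup G_2$ with $V(G_1)\cap V(G_2)=K$ and no edges between $V(G_1)\setminus K$ and $V(G_2)\setminus K$, we have $\chi(G)=\max(\chi(G_1),\chi(G_2))$ and a $4$-coloring of $G$ can be assembled from $4$-colorings of $G_1$ and $G_2$; since clique cutsets can be found in polynomial time, we may assume $G$ has none.

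Now the dichotomy. If $G$ is $C_5$-free then $G$ is $(P_6,C_5)$-free and the algorithm of Chudnovsky et al.~\cite{CMSZ} settles the question, so assume $G$ contains an induced $C_5$ on vertices $v_1,\dots,v_5$ (indices mod $5$). For $S\subseteq\{1,\dots,5\}$ let $V_S$ be the set of vertices of $V(G)\setminus\{v_1,\dots,v_5\}$ whose neighborhood on the $C_5$ is exactly $\{v_i:i\in S\}$. The core of the proof is a structural lemma obtained by going through the possibilities for $S$ and using the two forbidden induced subgraphs: $V_\emptyset=\emptyset$ because $G$ is connected and $P_6$-free (take a vertex of $V_\emptyset$ together with a shortest path to the $C_5$ and two further consecutive vertices of the $C_5$ to exhibit an induced $P_6$); $V_{\{i,i+1\}}=\emptyset$ for every $i$ because such a vertex $x$ makes $\{v_{i-1},v_i,v_{i+1},v_{i+2},x\}$ a bull; and, using the remaining forbidden configurations together with $\omega(G)\le 4$ and the absence of a clique cutset, the classes $V_{\{i\}}$, $V_{\{i,i+2\}}$ and the larger ones, and the edges among them, are tightly constrained. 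The target of this analysis is a description of $G$ as, essentially, a blow-up of $C_5$ --- consecutive ``sides'' completely joined, non-consecutive ones anticomplete --- plus a controlled amount of extra structure whose vertices have severely restricted neighborhoods.

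Once such a description is in hand, deciding $4$-colorability is polynomial. For a blow-up $C_5[H_1,\dots,H_5]$ with $\omega\le 4$ one has $\omega(H_i)+\omega(H_{i+1})\le 4$, and a $4$-coloring amounts to choosing, for each side, a color set $S_i\subseteq\{1,\dots,4\}$ with $\chi(H_i)\le|S_i|$ and $S_i\cap S_{i+1}=\emptyset$ for all $i$; this is governed by finitely many parameters of the $H_i$, which are themselves $(P_6,\mbox{bull})$-free and can be handled recursively. In the presence of the extra vertices I would instead precolor the constantly many ``special'' vertices in all possible ways; in each branch the structure forces the list of available colors at every remaining vertex down to size at most two, so the branch is an instance of $2$-\textsc{Sat} and is solved in linear time, and the overall running time stays polynomial.

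The main obstacle is exactly the structural lemma for the $C_5$-containing case: one must treat every subset $S$ and every possible pattern of edges between the classes $V_S$, repeatedly invoking the absence of an induced $P_6$ or a bull (and of a clique cutset), and then check that each surviving configuration is covered by the colorability test above. A secondary difficulty is to make sure that this final test is genuinely polynomial rather than merely finite for a fixed choice of $C_5$ --- that is, that the reduction to $2$-\textsc{Sat} (or an explicit formula for the chromatic number of the blow-up) really applies to all the configurations the lemma produces.
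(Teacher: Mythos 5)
There is a genuine gap: the entire content of your argument is the ``structural lemma'' for the $C_5$-containing case, and it is both unproved and, in its first concrete claim, false. You assert that $V_\emptyset=\emptyset$ because a vertex with no neighbour on the $C_5$, a shortest path to the $C_5$, and two further cycle vertices yield an induced $P_6$. This fails when the intermediate vertex of that path has many neighbours on the cycle: take the $C_5$ on $v_1,\dots,v_5$, add a vertex $u$ complete to the cycle, and a vertex $x$ adjacent only to $u$. This graph is $(P_6,\mbox{bull})$-free (every induced path through $u$ has at most three vertices, and no triangle admits pendants at two distinct corners), it is connected, has no clique cutset issues you cannot also build around, yet $x$ is anticomplete to the $C_5$. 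More generally, vertices complete to the $C_5$ and vertices anticomplete to it are exactly the hard part of the problem: in the paper's analysis such vertices survive all the reductions and give rise to the sets $V_5$, $W$, $Z_0$, $Z_1$ of Theorem~\ref{thm:yesgem} and to the configurations $F_0,\dots,F_6$, whose treatment (homogeneous sets via the broom lemma, magnets, and the $w^*$ vertex for $Z_1$) occupies most of the paper. Consequently your target description --- ``essentially a blow-up of $C_5$ plus a controlled amount of extra structure'' --- is not what $(P_6,\mbox{bull})$-free graphs containing a $C_5$ look like, and no amount of case analysis on the sets $V_S$ alone will reach it; the paper instead splits on the presence of a gem, proving bounded clique-width (via triangle-freeness of prime $C_5$-containing gem-free graphs) in one branch and the explicit partition of Theorem~\ref{thm:yesgem} in the other.

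Two further points. First, your reduction toolkit omits the homogeneous-set (quasi-prime) reduction of Lemma~\ref{lem:abc}, which the paper needs repeatedly (Lemma~\ref{lem:broom} only produces either a proper homogeneous set or the vertices $z,t$); clique cutsets and $\omega\le 4$ are not an adequate substitute, since the obstructions here include the double wheel, which has no clique cutset and $\omega=4$. Second, the $C_5$-free branch of your dichotomy (invoking the $(P_6,C_5)$-free algorithm of Chudnovsky et al.) is legitimate as a black box, so the gap is confined to, but is essentially all of, the $C_5$-containing case: as your own closing paragraph concedes, the structural lemma is ``the main obstacle,'' and it is precisely the part that is missing and whose stated first step does not hold.
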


\begin{theorem}\label{thm:kp6bgf}
For any fixed $k$, there is a polynomial-time algorithm that
determines if a $(P_6,\mbox{bull},\mbox{gem})$-free graph is
$k$-colorable, and if it is, produces a $k$-coloring of $G$.
\end{theorem}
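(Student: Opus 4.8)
The strategy is to prove that $(P_6,\mbox{bull},\mbox{gem})$-free graphs have clique-width bounded by an absolute constant, and then to appeal to standard machinery. Indeed, $k$-colorability is expressible by a sentence of monadic second-order logic without edge quantification (``$V(G)$ can be partitioned into $k$ stable sets''), so by the theorem of Courcelle, Makowsky and Rotics it can be decided in polynomial (in fact linear) time on any graph supplied together with a clique-width expression of bounded width, and such an expression is produced in polynomial time by the algorithm of Oum and Seymour as soon as the clique-width is bounded; a satisfying assignment, hence an explicit $k$-coloring, is read off from the associated dynamic program. It thus suffices to prove the structural statement, and by the usual preprocessing this reduces to the case of a connected, prime graph with no clique cutset: clique-cutset decomposition is compatible with $k$-coloring and is computable in polynomial time, and the modular decomposition is compatible with clique-width (the clique-width of a graph is bounded in terms of the clique-widths of the prime induced subgraphs occurring in its decomposition tree, the series and parallel nodes contributing clique-width at most $2$), and every graph arising in these decompositions is again $(P_6,\mbox{bull},\mbox{gem})$-free since the class is hereditary.

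So the crux, and the step I expect to be the main obstacle, is the following: \emph{a connected, prime $(P_6,\mbox{bull},\mbox{gem})$-free graph with no clique cutset has bounded clique-width}. Here all three forbidden subgraphs pull their weight. Gem-freeness says exactly that for every vertex $v$ the neighborhood $N(v)$ induces a $P_4$-free graph, that is, a cograph, so every vertex has a tame local picture, of clique-width at most $2$; bull-freeness severely restricts how the neighborhoods of two vertices may overlap; and $P_6$-freeness prevents the graph from spreading out. I would use $P_6$-freeness first to produce a dominating subgraph $D$ of bounded order — for instance a dominating clique, or a dominating set of bounded size — and then argue that, relative to $D$, the vertex set of $G$ partitions into a bounded number of pieces, each attaching to $D$ in one of only finitely many patterns, the patterns being forced because the relevant neighborhoods are cographs and because the bull and no-clique-cutset hypotheses rule out the remaining configurations (primeness being used to prevent large homogeneous pieces from reappearing). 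From such a description one builds a clique-width expression of bounded width directly: the bounded part $D$ is handled by brute force, each cograph-structured piece by the bound of $2$ for cographs, and the pieces are reattached using the finitely many patterns. The delicate points are pinning down the right notion of dominating structure in a $P_6$-free graph and carrying out the case analysis of attachment patterns without the list of configurations exploding; the bull-free hypothesis is the main device for keeping that list short.

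Finally, the algorithm itself. Given the input $G$: check in time $O(n^6)$ that $G$ has no induced $P_6$, bull or gem (otherwise the input lies outside the class); compute the clique-cutset decomposition, reducing to $k$-coloring each atom; on each atom — which has bounded clique-width, by the structural lemma together with the modular decomposition — run Oum and Seymour's algorithm to obtain a clique-width expression of bounded width, and then run the Courcelle--Makowsky--Rotics dynamic program for the $\mathrm{MSO}_1$ sentence expressing $k$-colorability, producing an explicit $k$-coloring when one exists; finally reassemble the $k$-colorings upward through the clique-cutset decomposition. Every step runs in polynomial time for each fixed $k$, which gives the theorem.
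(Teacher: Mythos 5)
The standard machinery you invoke is fine: $k$-colorability for fixed $k$ is MSO$_1$-expressible, the Courcelle--Makowsky--Rotics theorem applies once a bounded-width expression is available, Oum--Seymour supplies such an expression when the clique-width is bounded, and modular/clique-cutset decompositions are compatible with coloring. But the entire weight of your argument rests on the emphasized claim that every connected, prime $(P_6,\mbox{bull},\mbox{gem})$-free graph without a clique cutset has bounded clique-width, and you do not prove it. What you give is a plan, not an argument: the very first step (``use $P_6$-freeness to produce a dominating subgraph $D$ of bounded order'') is already unjustified, since connected $P_6$-free graphs are only guaranteed a dominating induced $C_6$ or a dominating \emph{complete bipartite} subgraph, which need not have bounded size; and the ``finitely many attachment patterns'' case analysis, which you yourself flag as the delicate point, is never carried out. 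So the crux of the theorem is missing, and it is not a claim one can take on faith: the paper never proves bounded clique-width for the whole class, and nothing in it suggests that statement is known to be true.

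For comparison, the paper's proof splits into two cases. If $G$ contains no $C_5$, then ($P_6$-freeness excludes holes of length at least $7$, gem-freeness excludes the long antiholes) $G$ is a bull-free \emph{perfect} graph, and the paper colors it optimally by the purely combinatorial algorithms of de~Figueiredo--Maffray or Penev, then compares $\chi(G)$ with $k$; no clique-width bound is claimed or used in this case, which is a strong hint that your blanket structural lemma is at best open precisely where the graph is perfect. If $G$ does contain a $C_5$, the paper proves a genuine structural theorem (Theorem~\ref{thm:gemfree}): a prime $(P_6,\mbox{bull},\mbox{gem})$-free graph containing a $C_5$ is triangle-free; bounded clique-width then follows from the Brandst\"adt--Klembt--Mahfud bound for $(P_6,K_3)$-free graphs together with modular decomposition, using gem-freeness to make every maximal module $P_4$-free so that prime induced subgraphs meet each module in at most one vertex. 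Unless you can actually prove your structural claim --- and in particular handle the $C_5$-free, perfect case where the paper deliberately switches to a different technique --- your proposal remains a strategy rather than a proof of the theorem.
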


\begin{figure}[ht]
\unitlength=0.08cm
\thicklines
\begin{center}
\begin{tabular}{cc}

\begin{picture}(24,20) 
\multiput(6,6)(12,0){2}{\vertex}
\multiput(0,18)(24,0){2}{\vertex}
\put(12,15){\vertex}
\put(6,6){\line(1,0){12}}
\put(18,6){\line(1,2){6}} \put(6,6){\line(-1,2){6}}
\put(12,15){\line(2,-3){6}}\put(12,15){\line(-2,-3){6}}
\put(9,-4){Bull}
\end{picture}

\quad & \quad
\begin{picture}(24,20) 
\multiput(6,6)(12,0){2}{\vertex}
\multiput(0,18)(24,0){2}{\vertex}
\put(12,15){\vertex}
\put(6,6){\line(1,0){12}}
\put(18,6){\line(1,2){6}} \put(6,6){\line(-1,2){6}}
\put(12,15){\line(4,1){12}} \put(12,15){\line(-4,1){12}}
\put(12,15){\line(2,-3){6}}\put(12,15){\line(-2,-3){6}}
\put(9,-4){Gem}
\end{picture}
\\
\end{tabular}
\end{center}
\caption{The bull and the gem.}
\label{fig:bg}
\end{figure}
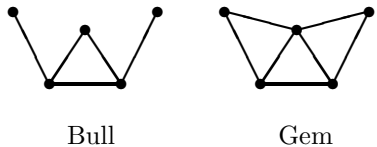

Our paper is organised as follows.  In the rest of this section we
recall some terminology and notation and we show that in order to
prove Theorem~\ref{thm:main} it suffices to prove it for graphs that
satisfy certain restrictions.  In Section~2 we show that the problem
reduces to graphs that do not contain certain special graphs.  In
Section~3 we consider the case when the graph has no gem.  In that
case we show that the graph either is perfect or has bounded
clique-width, from which it follows that the $k$-colorability can be
determined in polynomial time for any fixed $k$, thus proving
Theorem~\ref{thm:kp6bgf}.  Finally in Section~4 we consider the case
when the graph contains a gem and none of the special graphs, and we
deduce a structural description of the graph which we can use to solve
$4$-colorability problem directly, proving Theorem~\ref{thm:main}.

\medskip

Let us recall some definitions and notation.  Let $G$ be a graph.  For
each $v\in V(G)$, we denote by $N_G(v)$ the set of vertices adjacent
to $v$ (the \emph{neighbors} of $v$) in $G$, and when there is no
ambiguity we simply write $N(v)$.  For any subset $S$ of $V(G)$ we
write $N_S(v)$ instead of $N(v)\cap S$; and for a subgraph $H$ we
write $N_H(v)$ instead of $N_{V(H)}(v)$.  We say that a vertex $v$ is
\emph{complete} to $S$ if $v$ is adjacent to every vertex in $S$, and
that $v$ is \emph{anticomplete} to $S$ if $v$ has no neighbor in $S$.
For two sets $S,T\subseteq V(G)$ we say that $S$ is complete to $T$ if
every vertex of $S$ is adjacent to every vertex of $T$, and we say
that $S$ is anticomplete to $T$ if no vertex of $S$ is adjacent to any
vertex of $T$.  For $S\subseteq V(G)$ we denote by $G[S]$ the induced
subgraph of $G$ with vertex-set $S$, and we denote by $N(S)$ the set
$\{v\in V(G)\setminus S\mid$ $v$ has a neighbor in $S\}$.  The
complement of $G$ is denoted by $\overline{G}$.

\medskip

Let $\omega(G)$ denote the maximum size of a clique in
$G$.  A graph $G$ is \emph{perfect} if every induced subgraph $H$ of
$G$ satisfies $\chi(H)=\omega(H)$.   By the Strong Perfect Graph
Theorem \cite{CRST}, a graph is perfect if and only if it contains no
$C_\ell$ and no $\overline{C_\ell}$ for any odd $\ell\ge 5$.

\subsection{Homogeneous sets and modules}\label{sec:mod}

A \emph{homogeneous set} is a set $S\subseteq V(G)$ such that every
vertex in $V(G)\setminus S$ is either complete to $S$ or anticomplete
to $S$.  A homogeneous set is \emph{proper} if it contains at leat two
vertices and is different from $V(G)$.  A graph is \emph{prime} if it
has no proper homogeneous set.  A \emph{module} is a homogeneous set
$S$ such that every homogeneous set $S'$ satisfies either $S'\subseteq
S$ or $S\subseteq S'$ or $S\cap S'=\emptyset$.  In particular $V(G)$
is a module and every singleton $\{v\}$ $(v\in V(G)$) is a module.
The theory of modular decomposition (the study of the modules of a
graph) is a rich one, starting from the seminal work of Gallai
\cite{Gallai}.  We mention here only the results we will use.  We say
that a module $S$ is \emph{maximal} if $S\neq V(G)$ and there is no
module $S'$ such that $S\subset S'\subset V(G)$ (with strict
inclusion).
\begin{itemize}
\item
Any graph $G$ has at most $2|V(G)|$ modules, and they can be produced
by an algorithm of time complexity $O(|V(G)|+|E(G)|)$ \cite{CHPT}.
\item
If both $G$ and $\overline{G}$ are connected, then $G$ has a least
four maximal modules and they form a partition of $V(G)$, and every
homogeneous set of $G$ different from $V(G)$ is included in a maximal
module; moreover, the induced subgraph $G'$ of $G$ obtained by picking
one vertex from each maximal module of $G$ is a prime graph.
\end{itemize}

Here we will say that a graph $G$ is \emph{quasi-prime} if every
proper homogeneous set of $G$ is a clique.  We say that two vertices
$u,v$ are \emph{twins} if $\{u,v\}$ is a homogeneous set.  Hence in a
quasi-prime graph every homogeneous set consists of pairwise adjacent
twins.

\medskip

We let $K_n$ denote the complete graph on $n$ vertices.  The graph
$K_3$ is usually called a \emph{triangle}.  We call \emph{double
wheel} the graph obtained from a $C_5$ by adding two adjacent vertices
$a,b$ with all edges from $a$ and $b$ to the vertices of the $C_5$.
Note that $K_5$ and the double wheel are not $4$-colorable.

\begin{lemma}\label{lem:abc}
In order to prove Theorem~\ref{thm:main} it suffices to prove it for
the $(P_6,\mbox{bull})$-free graphs $G$ that satisfy the following
properties:
\begin{itemize}
\item[(a)]
$G$ is connected and $\overline{G}$ is connected.
\item[(b)]
$G$ is quasi-prime.
\item[(c)]
$G$ is $K_5$-free and double-wheel-free.
\end{itemize}
\end{lemma}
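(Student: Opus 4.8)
The plan is to describe a polynomial-time procedure that, given an arbitrary $(P_6,\mbox{bull})$-free graph $G$, either answers the $4$-colorability question directly or reduces it to the same question on one or more strictly smaller $(P_6,\mbox{bull})$-free graphs, iterating until every graph in hand satisfies (a)--(c), at which point we apply the algorithm that by hypothesis handles that case. A tool used throughout is that $k$-colorability is decidable in polynomial time for $P_6$-free graphs when $k\le 3$ (bipartiteness for $k=2$; \cite{RS,BFGP} for $k=3$), so that for any induced subgraph $H$ of $G$ we can in polynomial time either compute $\chi(H)$, when $\chi(H)\le 3$, or certify $\chi(H)\ge 4$; and that $K_5$ and the double wheel are not $4$-colorable.

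First I would reduce to (a). If $G$ is disconnected, $G$ is $4$-colorable iff each of its components is, and a colouring is obtained as the union; each component is a smaller $(P_6,\mbox{bull})$-free graph, so we recurse. If $G$ is connected but $\overline G$ is disconnected, then $G$ is the join of $t\ge 2$ graphs $G_1,\ldots,G_t$, each $(P_6,\mbox{bull})$-free, and $\chi(G)=\sum_i\chi(G_i)$; since $t\ge 2$, if some $\chi(G_i)\ge 4$ then $G$ is not $4$-colorable, and otherwise each $\chi(G_i)\in\{1,2,3\}$ is found as above, $G$ is $4$-colorable iff $\sum_i\chi(G_i)\le 4$, and a colouring is built by giving the $G_i$ pairwise disjoint blocks of colours.

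Next, assuming $G$ and $\overline G$ are connected, I would reduce to (b). If $G$ is not quasi-prime, then---using that $\overline G$ is connected, so $G$ has at least four maximal modules, they partition $V(G)$, and every proper homogeneous set lies in one of them---at least one maximal module $M$ is not a clique; also $N(M)\ne\emptyset$ since $G$ is connected and $M\ne V(G)$. If $\chi(G[M])\ge 4$, then $G$ is not $4$-colorable: when $\chi(G[M])\ge 5$ this is clear, and when $\chi(G[M])=4$ any $4$-colouring of $G$ would force the nonempty set $N(M)$, which is complete to $M$, to avoid the four colours used inside $M$. Otherwise put $t=\chi(G[M])\in\{1,2,3\}$ and form $G^\ast$ from $G$ by replacing $G[M]$ with a clique $K_t$ (complete to $N(M)$, anticomplete to the rest). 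I claim $G$ is $4$-colorable iff $G^\ast$ is: from a $4$-colouring of $G$, recolour $G[M]$ with only $t$ of the colours it uses---no new conflict arises, since in the original colouring $N(M)$ avoids every colour of $M$---and collapse $M$ to $K_t$; conversely a $4$-colouring of $G^\ast$ puts $t$ distinct colours on $K_t$, avoided by $N(M)$, and those colours suffice for a $t$-colouring of $G[M]$. Moreover $G^\ast$ is again $(P_6,\mbox{bull})$-free, because no two vertices of $P_6$ or of the bull have the same closed neighbourhood, so any induced $P_6$ or bull in $G^\ast$ meets the module $K_t$ in at most one vertex and hence already occurs in $G$. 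Since $M$ is not a clique, $t<|M|$, so $|V(G^\ast)|<|V(G)|$; I would recurse on $G^\ast$ (re-establishing (a) first if needed) and iterate until the graph is quasi-prime.

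Finally, once (a) and (b) hold, (c) is immediate: search by brute force over all $5$-element and $7$-element subsets of $V(G)$ for an induced $K_5$ or double wheel; if one is found, $G$ contains an induced subgraph that is not $4$-colorable, so neither is $G$, and otherwise $G$ satisfies (a)--(c) and we call the assumed algorithm. Every reduction strictly decreases the number of vertices or replaces $G$ by strictly smaller graphs; modular decompositions are computed in linear time \cite{CHPT}, and the $\le 3$-colouring and brute-force subroutines are polynomial; accounting for the recursion along the modular decomposition tree, which has only linearly many nodes, shows the whole procedure is polynomial. The one genuinely delicate step, I expect, is the module replacement in the reduction to (b): establishing the ``$G$ is $4$-colorable iff $G^\ast$ is'' equivalence and checking that collapsing a non-clique module to a clique of the correct size both preserves $(P_6,\mbox{bull})$-freeness and strictly decreases the order; everything else is routine bookkeeping, modulo the cited polynomial solvability of $3$-colouring for $P_6$-free graphs.
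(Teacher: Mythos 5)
Your proposal is correct and follows essentially the same route as the paper: handle components and joins via the polynomial $3$-colorability test for $P_6$-free graphs, replace non-clique maximal modules by cliques of size $\chi(G[M])$ (with the same no-twins argument showing $(P_6,\mbox{bull})$-freeness is preserved), and test directly for $K_5$ and the double wheel. The only difference is procedural: you collapse one module at a time and iterate, with termination by strict decrease of the vertex count, whereas the paper collapses all non-clique maximal modules simultaneously and then separately verifies that the resulting graph is quasi-prime; both variants are valid and polynomial.
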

\begin{proof}
Assume that we want to determine whether a $(P_6,\mbox{bull})$-free
graph $G$ is $4$-colorable.

\medskip

(a) If $G$ is not connected we can examine each component of $G$
separately.  Now suppose that $\overline{G}$ is not connected.  So
$V(G)$ can be partitioned into two non-empty sets $V_1$ and $V_2$ that
are complete to each other.  It follows that $\chi(G)=
\chi(G[V_1])+\chi(G[V_2])$.  A necessary condition for $G$ to be
$4$-colorable is that $G[V_i]$ is $3$-colorable for each $i=1,2$.
Using the algorithms from \cite{BFGP} or \cite{RS} we can test whether
$G[V_1]$ and $G[V_2]$ are $3$-colorable.  If any of them is not
$3$-colorable we declare that $G$ is not $4$-colorable and stop.  If
each $G[V_i]$ is $3$-colorable, we can determine the value of
$\chi(G[V_i])$ by further testing whether $G[V_i]$ is either edgeless
or bipartite.  Hence we can determine if $G$ is $4$-colorable (and if
it is, give a $4$-coloring) in polynomial time.

\medskip

(b) Suppose that $G$ is not quasi-prime.  So $G$ has a homogeneous set
$X$ that is not a clique and $X\neq V(G)$.  Since $G$ and
$\overline{G}$ are connected $X$ is included in a maximal module.
Hence let us consider any maximal module $M$ of $G$ that is not a
clique.  We know that $M\neq V(G)$, so the set $N(M)$ is not empty,
and $N(M)$ is complete to $M$.  So a necessary condition for $G$ to be
$4$-colorable is that $G[M]$ is $3$-colorable.  Using the algorithms
from \cite{BFGP} or \cite{RS} we can determine whether $G[M]$ is
$3$-colorable or not.  If it is not we declare that $G$ is not
$4$-colorable and stop.  If $G[M]$ is $3$-colorable, we can determine
the value of $\chi(G[M])$ by further testing whether $G[M]$ is either
edgeless or bipartite.  Then we build a new graph $G'$ from $G$ by
removing $M$ and adding a clique $K_M$ of size $\chi(G[M])$ with edges
from every vertex of $K_M$ to every vertex in $N(M)$ and no other
edge.  Thus $K_M$ is a homogeneous set in $G'$, with the same
neighborhood as $M$ in $G$.  We observe that:
\begin{equation}\label{gppbf}
\mbox{$G'$ is $P_6$-free and bull-free.}
\end{equation}
Proof: If $G'$ has an induced subgraph $H$ that is either a $P_6$ or a
bull, then $H$ must contain a vertex $v$ from $K_M$ (because
$G'\setminus K_M=G\setminus M$), and $H$ does not contain two vertices
from $K_M$ since $H$ has no twins.  Then, replacing $v$ with any
vertex from $M$ yields an induced $P_6$ or bull in $G$, a
contradiction.  So (\ref{gppbf}) holds.

We repeat this operation for every maximal module of $G$ that is not a
clique.  Hence we obtain a graph $G''$ where every such module $M$ has
been replaced with a clique $K_M$, and, by the same argument as in
(\ref{gppbf}), $G''$ is $P_6$-free and bull-free.  For convenience we
set $K_L=L$ whenever $L$ is a maximal module of $G$ that is a clique.
We observe that:
\begin{equation}\label{gdpqp}
\mbox{$G''$ is quasi-prime.}
\end{equation}
Proof: Suppose that $G''$ has a homogeneous set $Y''$ that is not a
clique, and $Y''\neq V(G'')$.  Let $A''=N_{G''}(Y'')$ and
$B''=V(G'')\setminus (Y''\cup A'')$.  For each vertex $x\in V(G'')$
let $M_x$ be the maximal module of $G$ such that $x\in K_{M_x}$.  Let
$Y=\bigcup_{x\in Y''} M_x$, $A=\bigcup_{x\in A''} M_x$ and
$B=\bigcup_{x\in B''} M_x$.  In $G$ the set $Y$ is complete to $A$ and
anticomplete to $B$, and $V(G)=Y\cup A\cup B$.  So $Y$ is a
homogeneous set of $G$, and $Y\neq V(G)$, so there is a maximal module
$L$ of $G$ such that $Y\subseteq L$.  But this implies $Y''\subseteq
K_L$, a contradiction.  So (\ref{gdpqp}) holds.

\begin{equation}\label{ggp}
\mbox{$G$ is $4$-colorable if and only if $G''$ is $4$-colorable.}
\end{equation}
Proof: Let $c$ be a $4$-coloring of $G$.  For each maximal module $M$
of $G$ we have $|c(M)|\ge \chi(G[M])$.  So we can assign to the
vertices of $K_M$ distinct colors from the set $c(M)$.  Doing this for
every $M$ yields a $4$-coloring of $G''$.  Conversely, let $c''$ be a
$4$-coloring of $G''$.  For every maximal module $M$ of $G$, consider
a $\chi(G[M])$-coloring of $M$ and assign to each class of this
coloring one color from the set $c''(K_M)$ (a different color for each
class).  Doing this for every $M$ yields a $4$-coloring of $G$.  So
(\ref{ggp}) holds.

\medskip

The operations performed to construct $G''$ can be done in polynomial
time using modular decomposition \cite{CHPT} and the algorithms from
\cite{RS,BFGP}.  Since the maximal modules of $G$ form a partition of
$V(G)$ their number is $O(|V(G)|)$.  So we can ensure that property
(b) holds through a polynomial time reduction.

\medskip

(c) One can decide in polynomial time whether $G$ contains $K_5$ or
the double wheel, and if it does we stop since these two graphs are
not $4$-colorable.
\end{proof}

The complexity of testing if a $P_6$-free graph on $n$ vertices is
$3$-colorable is $O(n^{\alpha+2})$ in \cite{RS} (where $\alpha$ is the
exponent given by the fast matrix multiplication, $\alpha<2.36$) and
seems to be $O(n^3)$ in \cite{BFGP} using the special dominating set
argument from \cite{vHP}.  Hence the total complexity of the reduction
steps described in the preceding lemma is $O(n^5)$.

\section{Brooms and magnets}\label{sec:bm}

In this section we prove that if a quasi-prime ($P_6$, bull)-free
graph $G$ contains certain special graphs (called ``magnets''), then
the $4$-colorability of $G$ can be solved in polynomial time using a
reduction to the $2$-list coloring problem.

We first show that if a ($P_6$, bull)-free graph $G$ contains a
certain graph which we call a broom, then either $G$ is not
quasi-prime, or the broom can be extended to subgraphs that will be
convenient to us.

\subsection{Brooms}\label{sec:broom}

A \emph{broom} is a graph with six vertices $v_1, \ldots, v_6$ and
edges $v_1v_2$, $v_2v_3$, $v_3v_4$ and $v_5v_i$ for each
$i\in\{1,2,3,4,6\}$.  See Figure~\ref{fig:BF7}.

Let $F_0$ be the graph with seven vertices $v_1,\ldots,v_7$ and edges
$v_1v_2$, $v_2v_3$, $v_3v_4$, $v_4v_5$, $v_5v_1$, $v_6v_i$ for all
$i\in\{1,\ldots,5\}$, and $v_7v_1$, $v_7v_2$, $v_7v_3$, $v_7v_4$.

\begin{figure}[ht]
\unitlength=0.08cm
\thicklines
\begin{center}
\begin{tabular}{cc}
\begin{picture}(18,33) 
\multiput(0,6)(6,0){4}{\vertex}
\multiput(9,15)(0,12){2}{\vertex}
\put(0,6){\line(1,0){18}}
\put(9,15){\line(0,1){12}}
\put(0,6){\line(1,1){9}}\put(18,6){\line(-1,1){9}}
\put(6,6){\line(1,3){3}}\put(12,6){\line(-1,3){3}}
\put(1,-4){Broom}
\end{picture}

\quad & \quad

\begin{picture}(24,33) 
\put(15,6){\vertex}
\multiput(12,12)(6,0){2}{\vertex}
\multiput(6,15)(18,0){2}{\vertex}
\multiput(15,18)(0,6){2}{\vertex}
\put(15,6){\line(1,1){9}}\put(15,6){\line(-1,1){9}}
\put(15,6){\line(1,2){3}}\put(15,6){\line(-1,2){3}}
\put(12,12){\line(1,0){6}}
\put(12,12){\line(-2,1){6}}\put(18,12){\line(2,1){6}}
\put(15,18){\line(-3,-1){9}}\put(15,18){\line(3,-1){9}}
\put(15,18){\line(-1,-2){3}}\put(15,18){\line(1,-2){3}}
\put(15,18){\line(0,1){6}}
\put(15,24){\line(-1,-1){9}}\put(15,24){\line(1,-1){9}}
\put(13,-4){$F_0$}
\end{picture}
\end{tabular}
\end{center}
\caption{The broom and the graph $F_0$}\label{fig:BF7}
\end{figure}
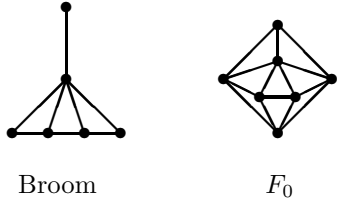

\medskip

The following lemma is an extension of Lemma~2 from \cite{FMP}.

\begin{lemma}\label{lem:broom}
In a bull-free graph $G$, let $\{v_1, \ldots, v_6\}$ be a $6$-tuple
that induces a broom, with edges $v_1v_2$, $v_2v_3$, $v_3v_4$ and
$v_5v_i$ for each $i\in\{1,2,3,4,6\}$.  Then one of the following
holds:
\begin{itemize}
\item
$G$ has a proper homogeneous set that contains $\{v_1, v_2, v_3,
v_4\}$.
\item
There is a vertex $z$ in $V(G)\setminus\{v_1, \ldots, v_6\}$ that is
complete to $\{v_1, v_4, v_6\}$ and anticomplete to $\{v_2, v_3,
v_5\}$.
\item
There are two non-adjacent vertices $z,t$ in $V(G)\setminus\{v_1,
\ldots, v_6\}$ such that $z$ is complete to $\{v_1, v_4, v_5, v_6\}$
and anticomplete to $\{v_2, v_3\}$ and $t$ is complete to $\{v_1, v_2,
v_3, v_4\}$ and anticomplete to $\{v_5,v_6\}$ (and so $\{v_1, \ldots,
v_5,$ $z, t\}$ induces an $F_0$).
\end{itemize}
\end{lemma}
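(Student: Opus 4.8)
The plan is to argue by analyzing the possible neighborhoods of an arbitrary vertex $z\in V(G)\setminus\{v_1,\ldots,v_6\}$ with respect to the broom, exploiting repeatedly the fact that $G$ is bull-free. Write $B=\{v_1,v_2,v_3,v_4,v_5,v_6\}$. First I would record the induced bulls that sit inside $B$ together with one extra vertex: for instance, $\{v_1,v_2,v_3,v_4,v_5\}$ already carries several bulls (e.g.\ $v_2v_1,v_1v_5,v_5v_4,v_4v_3$ with the edge $v_5v_1$\dots), and each time a new vertex $z$ attaches to part of $B$ we get further bull-constraints. The key observation is the classical one behind Lemma~2 of \cite{FMP}: if $z$ is adjacent to exactly one of two "symmetric" vertices in a path-like configuration while being adjacent or non-adjacent to the appropriate apex, a bull is forced; hence the attachment of $z$ to $B$ is severely restricted.

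Concretely, I would proceed in cases according to $N_z(\{v_2,v_3\})$ and $N_z(v_5)$, since $v_2,v_3$ are the "interior" vertices of the path $v_1v_2v_3v_4$ and $v_5$ is the apex.

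\begin{itemize}
\item If $z$ is anticomplete to $\{v_1,v_2,v_3,v_4\}$, then (using that $z\notin B$ and $G$ is bull-free, looking at $\{z,v_5,v_i,v_{i+1},\dots\}$) one shows $z$ must also be anticomplete to $v_5$ and to $v_6$, so $z$ can be moved into a homogeneous set; iterating over all such $z$ yields the first outcome with homogeneous set exactly $\{v_1,v_2,v_3,v_4\}$ (after checking no vertex is complete to it in a way that would let us enlarge).
\item If $z$ has a neighbor and a non-neighbor in $\{v_1,v_2,v_3,v_4\}$, I would show $z$'s trace on the path $v_1v_2v_3v_4$ is forced to be one of a short list: by bull-freeness applied to $\{z\}\cup\{$three consecutive $v_i\}$ (and the apex $v_5$ when $z$ sees $v_5$), the only surviving patterns are $N_z\cap\{v_1,v_2,v_3,v_4\}\in\{\{v_1,v_4\},\{v_1,v_2,v_3,v_4\}\}$ together with controlled behaviour on $v_5,v_6$. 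The pattern $\{v_1,v_4\}$ with $z$ adjacent to $v_6$ and non-adjacent to $v_2,v_3,v_5$ gives the second outcome directly; the pattern $\{v_1,v_2,v_3,v_4\}$ is handled next.
\item If $z$ is complete to $\{v_1,v_2,v_3,v_4\}$, examine $zv_5$ and $zv_6$. Bull-freeness on $\{z,v_1,v_2,v_5,v_6\}$ etc.\ pins these down; the relevant surviving sub-case produces a vertex $t:=z$ complete to $\{v_1,v_2,v_3,v_4\}$ and anticomplete to $\{v_5,v_6\}$.
\end{itemize}

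Having produced such a $t$, if moreover the second outcome failed for \emph{every} vertex, I would then show that the set $\{v_1,v_2,v_3,v_4\}$ together with all vertices complete to it (call it $X$) is homogeneous, \emph{unless} some vertex $z$ attaches partially to $X$; analyzing that partial attachment (again via bull-freeness, now with $t$ and $v_5,v_6$ available as probes) forces $z$ to be complete to $\{v_1,v_4,v_5,v_6\}$, anticomplete to $\{v_2,v_3\}$, and non-adjacent to $t$, which is exactly the $F_0$-configuration of the third outcome. So in the end exactly one of the three alternatives must hold.

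The main obstacle I expect is the bookkeeping in the middle case: there are many ways a vertex $z$ can attach to the six vertices of the broom, and one must verify in each the existence of an induced bull (or $P_6$, though we only assume bull-freeness here), then argue that the partial-attachment vertices, when none of outcomes two or three occurs, can be \emph{absorbed} so that $\{v_1,v_2,v_3,v_4\}$ (or its closure under "complete to") becomes a genuine proper homogeneous set. Keeping track of which vertex plays which role in each bull, and ensuring the homogeneous set one builds is proper (at least two vertices, not all of $V(G)$ — the latter being automatic since $v_5\notin$ it), is the delicate part; the rest is a finite, if lengthy, case check modeled on \cite[Lemma~2]{FMP}.
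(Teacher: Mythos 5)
Your high-level strategy (assume outcomes two and three fail, then build a proper homogeneous set containing $\{v_1,v_2,v_3,v_4\}$) matches the paper's, but the two claims that carry your argument are false, and the real content of the paper's proof is missing. First, bull-freeness does \emph{not} restrict the trace of a partially attached vertex on $P=\{v_1,v_2,v_3,v_4\}$ to $\{v_1,v_4\}$ or all of $P$: vertices whose trace is, say, $\{v_1\}$, $\{v_1,v_3\}$, $\{v_1,v_2\}$ or $\{v_1,v_2,v_3\}$ (suitably adjacent to $v_5$) induce no bull with the broom and do not produce outcome two or three; the paper's proof explicitly introduces the classes $X$ (trace inside $\{v_1,v_3\}$ or $\{v_2,v_4\}$), $Y$ (complete to $\{v_1,v_2\}$ or $\{v_3,v_4\}$) and $Z$ (trace exactly $\{v_1,v_4\}$) precisely because these survive. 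Likewise, your first bullet's claim that a vertex anticomplete to $P$ must be anticomplete to $v_5$ and $v_6$ is false ($v_6$ itself has this attachment pattern, and a vertex adjacent only to $v_5$ creates no bull), so the homogeneous set cannot in general be ``exactly $\{v_1,v_2,v_3,v_4\}$''.

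Second, your proposed homogeneous set --- $P$ together with all vertices complete to it --- is the wrong set, essentially backwards. For properness the set must exclude $v_5$, and in the paper the vertices complete to $P\cup\{v_6\}$ (the set $A\ni v_5$) and most vertices complete to $P$ but not to $v_6$ (the set $B\setminus B'$) stay \emph{outside}, while the partially attached vertices $X\cup Y\cup Z$ are absorbed \emph{inside}, together with two closure sets whose construction is the heart of the proof: $F'$, the components of the anticomplete set $F$ that meet $X\cup Z$, and $B'$, the vertices of $B$ reachable from $Y\cup Z$ by chordless paths in the complement within $B$. The claims that $A$ is complete to $Y\cup Z$ and to $B'$, that $A\cup(B\setminus B')$ is complete to $F'$, and that $B'$ is anticomplete to $F\setminus F'$ are exactly where the failure of outcomes two and three is used (outcome two when a $Z$-vertex is adjacent to $v_6$, outcome three in the induction along the complement path defining $B'$). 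None of this mechanism appears in your sketch, and without it the ``absorption'' step you gesture at in the final paragraph cannot be carried out; so the proposal as written has a genuine gap rather than a complete alternative proof.
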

\begin{proof}
Let us assume that the second and third outcome of the lemma do not
occur.  Let $P=\{v_1, v_2, v_3, v_4\}$ and $R=V(G)\setminus P$.  We
classify the vertices of $R$ as follows; let:
\begin{itemize}
\item
$A=\{x\in R\mid$ $x$ is complete to $P\cup\{v_6\}\}$.
\item
$B=\{x\in R\mid$ $x$ is complete to $P$ and not adjacent to $v_6\}$.
\item
$F=\{x\in R\mid$ $x$ is anticomplete to $P\}$.
\item
$X=\{x\in R\setminus F\mid N(x)\cap P$ is included in either $\{v_1,
v_3\}$ or $\{v_2, v_4\}\}$.
\item
$Y=\{x\in R\setminus (A\cup B)\mid$ $x$ is complete to $\{v_1, v_2\}$
or to $\{v_3, v_4\}\}$.
\item
$Z=\{x\in R \mid$ $N(x)\cap P=\{v_1,v_4\}\}$.
\end{itemize}
Note that $v_5\in A$ and $v_6\in F$.  We observe that:
\begin{equation}\label{part}
\mbox{The sets $A, B, F, X, Y, Z$ form a partition of $R$.}
\end{equation}
Proof: Clearly these sets are pairwise disjoint.  Suppose that there
is a vertex $z$ in $R\setminus (A\cup B\cup F\cup X\cup Y\cup Z)$.
Since $z$ is not in $F$, it has a neighbor in $P$, and up to symmetry
we may assume that $z$ has a neighbor in $\{v_1,v_2\}$, and since $z$
is not in $Y$ it has exactly one neighbor in $\{v_1,v_2\}$.  Now since
$z$ is not in $X$, it must also have a neighbor in $\{v_3,v_4\}$, and
similarly it has exactly one neighbor in $\{v_3,v_4\}$.  Since $z$ is
not in $X\cup Z$, it must be that $N(z)\cap P=\{v_2,v_3\}$; but then
$P\cup\{z\}$ induces a bull.  So (\ref{part}) holds.

\begin{equation}\label{FY}
\mbox{$F$ is anticomplete to $Y$.}
\end{equation}
Proof: Suppose that there are adjacent vertices $f\in F$ and $y\in Y$.
Up to symmetry $y$ is complete to $\{v_1,v_2\}$.  Then $y$ must be
adjacent to $v_3$, for otherwise $\{f,y,v_1,v_2,v_3\}$ induces a bull,
and then to $v_4$, for otherwise $\{f,y,v_2,v_3,v_4\}$ induces a bull.
But then $y$ should be in $A\cup B$, not in $Y$.  So (\ref{FY}) holds.

\begin{equation} \label{ABX}
\mbox{$A\cup B$ is complete to $X$.}
\end{equation}
Proof: Suppose that there are non-adjacent vertices $a\in A\cup B$ and
$x\in X$.  Up to symmetry $x$ has exactly one neighbor in
$\{v_1,v_2\}$.  Then $x$ must be adjacent to $v_4$, for otherwise
$\{x, v_1, v_2, a, v_4\}$ induces a bull.  So $x$ is not adjacent to
$v_3$ and, by a symmetric argument, $x$ must be adjacent to $v_1$.
But then $x$ should be in $Z$, not in $X$.  So (\ref{ABX}) holds.

\begin{equation}\label{AYZ}
\mbox{$A$ is complete to $Y\cup Z$.}
\end{equation}
Proof: Suppose that there are non-adjacent vertices $a$ and $y\in
Y\cup Z$.  Suppose that $y\in Y$, say $y$ is complete to
$\{v_1,v_2\}$.  By (\ref{FY}), $y$ is not adjacent to $v_6$.  Then $y$
must be adjacent to $v_3$, for otherwise $\{y,v_2,v_3,a,v_6\}$ induces
a bull, and then to $v_4$, for otherwise $\{y,v_3,v_4,a,v_6\}$ induces
a bull.  But then $y$ should be in $A\cup B$, not in $Y$.  Now suppose
that $y\in Z$.  Then $y$ is adjacent to $v_6$, for otherwise
$\{y,v_1,v_2, a,v_6\}$ induces a bull.  But then we obtain the second
outcome of the lemma, a contradiction.  So (\ref{AYZ}) holds.

\medskip

Let $B'$ be the set of vertices $b$ in $B$ for which there exists in
$\overline{G}$ a chordless path $b_0$-$b_1$-$\cdots$-$b_k$ ($k\ge 1$)
such that $b_0\in Y\cup Z$, $b_1,\ldots, b_k\in B$ and $b_k=b$.  Such
a path will be called a $B'$-path for $b$.
\begin{equation}\label{BmB}
\mbox{$B\setminus B'$ is complete to $Y\cup Z\cup B'$.}
\end{equation}
This follows directly from the definition of $B'$.

\begin{equation}\label{ABp}
\mbox{$A$ is complete to $B'$.}
\end{equation}
Proof: Consider any $a\in A$ and $b\in B'$.  Let
$b_0$-$b_1$-$\cdots$-$b_k$ be a $B'$-path for $b$, as above.  By
(\ref{AYZ}), $a$ is adjacent to $b_0$.  Pick any $v_h$ in $P\cap
N(b_0)$.  First suppose that $b_0$ is not adjacent to $v_6$.  Then for
each $i\ge 1$ and by induction, $a$ is adjacent to $b_i$, for
otherwise $\{b_i, v_h, b_{i-1}, a, v_6\}$ induces a bull.  Hence $a$
is adjacent to $b$.  Now suppose that $b_0$ is adjacent to $v_6$; by
(\ref{FY}), this means that $b_0\in Z$.  Then $a$ must be adjacent to
$b_1$, for otherwise we obtain the third outcome of the lemma (where
$b_0,b_1$ play the role of $z,t$).  Then for each $i\ge 2$ and by
induction, $a$ is adjacent to $b_i$, for otherwise $\{b_i, b_{i-2},
v_6, a, b_{i-1}\}$ induces a bull.  Hence $a$ is adjacent to $b$.  So
(\ref{ABp}) holds.

\medskip

Let $F'$ be the set of vertices in the components of $F$ that have
a neighbor in $X\cup Z$.
\begin{equation}\label{ABmFXZ}
\mbox{$A\cup (B\setminus B')$ is complete to $F'$.}
\end{equation}
Proof: Consider any $a\in A\cup (B\setminus B')$ and $f\in F'$.  By
the definition of $F'$ there is a chordless path $f_0$-$\cdots$-$f_k$
with $f_0\in X\cup Z$, $f_1, \ldots, f_k\in F'$ and $f_k=f$.  By
(\ref{ABX}), (\ref{AYZ}) and (\ref{BmB}), $a$ is adjacent to $f_0$.
Since $f_0\in X\cup Z$, there are non-adjacent vertices $v,v'\in P$
such that $f_0$ is adjacent to $v$ and not to $v'$.  Then $a$ is
adjacent to $f_1$, for otherwise $\{f_1,f_0,v,a,v'\}$ induces a bull.
Then for each $i\ge 2$ and by induction, $a$ is adjacent to $f_i$, for
otherwise $\{f_i, f_{i-1}, f_{i-2}, a, v'\}$ induces a bull.  Hence
$a$ is adjacent to $f$.  So (\ref{ABmFXZ}) holds.

\begin{equation}\label{BpF}
\mbox{$B'$ is anticomplete to $F\setminus F'$.}
\end{equation}
Proof: Consider any $b\in B'$ and $f\in F\setminus F'$, and take a
$B'$-path $b_0$-$\cdots$-$b_k$ for $b$ as above.  Vertex $b_0$ is not
adjacent to $f$ by (\ref{FY}) (if $b_0\in Y$) or by the definition of
$F'$ (if $b_0\in Z$).  There exist two adjacent vertices $v_j,v_{j+1}$
of $P$ such that $b_0$ is adjacent to exactly one of them.  Vertex $f$
is not adjacent to $b_1$, for otherwise $\{b_0, v_j, v_{j+1}, b_1,
f\}$ induces a bull.  Then for each $i\ge 2$ and by induction, $f$ is
not adjacent to $b_i$, for otherwise $\{f, b_i, b_{i-2}, v, b_{i-1}\}$
induces a bull, where $v$ is any vertex in $P\cap N(b_0)$.  Hence $f$
is not adjacent to $b$.  So (\ref{BpF}) is proved.

\medskip

Now let $H=P\cup X\cup Y\cup Z\cup F'\cup B'$.  By (\ref{part}),
$V(G)$ is partitioned into the three sets $H$, $A\cup(B\setminus B')$
and $F\setminus F'$.  It follows from the definition of the sets and
Claims (\ref{ABX})--(\ref{BpF}) that $H$ is complete to $A\cup
(B\setminus B')$ and anticomplete to $F\setminus F'$, and we know that
$A\cup(B\setminus B')\neq\emptyset$ since $v_5\in A$.  So $H$ is a
homogeneous set that contains $\{v_1,v_2,v_3,v_4\}$, and it is proper
since it does not contain $v_5$.
\end{proof}

\subsection{Magnets}

We recall the variant of the coloring problem known as \emph{list
coloring}, which is defined as follows.  Every vertex $v$ of a graph
$G$ has a list $L(v)$ of allowed colors; then we want to know whether
the graph admits a coloring $c$ such that $c(v)\in L(v)$ for all $v$.
When all lists have size at most $2$ we call it a $2$-list coloring
problem; it is known that such a problem can be solved in linear time
in the size of the input (the number of lists), as it is reducible to
the $2$-satisfiability of Boolean formulas, see \cite{APT}.

Let us say that a subgraph $F$ of $G$ is a \emph{magnet} if every
vertex $x$ in $G\setminus F$ has two neighbors $u,v\in V(F)$ such that
$uv\in E(F)$.

\begin{lemma}\label{lem:magnet}
If a graph $G$ contains a magnet of bounded size, the $4$-coloring
problem can be solved on $G$ in linear time.
\end{lemma}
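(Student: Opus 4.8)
The plan is to reduce the $4$-colorability of $G$ to a $2$-list coloring instance whose size is polynomial (in fact linear) in $|V(G)|$, and then invoke the linear-time algorithm for $2$-list coloring from \cite{APT}. The key observation is that a magnet $F$ of bounded size has only a bounded number of proper $4$-colorings up to renaming: if $|V(F)|=f$, there are at most $4^f$ colorings of $F$, and this is a constant. So first I would enumerate all proper $4$-colorings $c_F$ of the induced subgraph $G[V(F)]$; there are at most $4^f = O(1)$ of them, and they can be listed in constant time. For each such $c_F$, the goal is to decide whether it extends to a $4$-coloring of all of $G$.

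Fix one proper $4$-coloring $c_F$ of $G[V(F)]$. For every vertex $x \in V(G)\setminus V(F)$, the magnet property gives an edge $uv \in E(F)$ with $u,v \in N(x)$; hence $x$ cannot receive the colors $c_F(u)$ or $c_F(v)$, which are two distinct colors. Therefore the set of admissible colors for $x$ is contained in a set of size (at most) $2$; define $L(x) := \{1,2,3,4\}\setminus\{c_F(u),c_F(v)\}$, and for $x \in V(F)$ set $L(x) := \{c_F(x)\}$. Now $c_F$ extends to a $4$-coloring of $G$ if and only if the list-coloring instance $(G,L)$ has a solution: one direction is immediate (a solution to $(G,L)$ is a proper coloring of $G$ agreeing with $c_F$ on $V(F)$), and for the other direction, any $4$-coloring of $G$ restricting to $c_F$ on $V(F)$ automatically respects every list $L(x)$, because the two forbidden colors $c_F(u),c_F(v)$ really are forbidden at $x$. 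Since every list has size at most $2$, this is a $2$-list coloring instance of size $O(|V(G)|+|E(G)|)$, solvable in linear time by \cite{APT}.

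The algorithm thus loops over the $O(1)$ candidate colorings $c_F$, builds the corresponding $2$-list instance, and solves it; $G$ is $4$-colorable if and only if at least one of these instances is satisfiable, and a satisfying assignment directly yields a $4$-coloring of $G$. The total running time is $O(1)$ times the cost of one linear-time $2$-list coloring computation, hence linear in the size of $G$. I do not anticipate a genuine obstacle here; the only point requiring a little care is the correctness of the equivalence between extending $c_F$ and solving $(G,L)$ — specifically that restricting the candidate colorings to those of the bounded-size magnet loses no generality, which is exactly the magnet hypothesis — and the observation that the number of colorings of $F$ to try is bounded because $|V(F)|$ is bounded.
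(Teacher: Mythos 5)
Your proposal is correct and follows essentially the same approach as the paper: enumerate the constantly many $4$-colorings of the bounded-size magnet $F$, use the magnet property to get lists of size at most $2$ for the remaining vertices, and solve the resulting $2$-list coloring instance in linear time via \cite{APT}. The only cosmetic difference is that you keep $V(F)$ in the list-coloring instance with singleton lists (letting the edges of $G$ enforce the remaining constraints), whereas the paper deletes $F$ and shrinks each list by all colors of neighbors in $F$; the two formulations are equivalent.
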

\begin{proof}
Let $F$ be a magnet in $G$.  We try every $4$-coloring of $F$.  Since
$F$ has bounded size there is a bounded number of possibilities.  We
try to extend the coloring to the rest of the graph as a list coloring
problem.  Every vertex $v$ in $G\setminus F$ has a list $L(v)$ of
available colors, namely the set $\{1,2,3,4\}$ minus the colors
assigned to the neighbors of $v$ in $F$.  Since $F$ is a magnet every
list has size at most $2$.  So coloring $G\setminus F$ is a $2$-list
coloring problem, which can be solved in linear time.
\end{proof}

In a graph $G$, let $\sim$ be the relation defined on the set $E(G)$
by putting $e\sim f$ if and only if $e$ and $f$ have a common vertex
and $e\cup f$ induces a $P_3$ in $G$.  We say that $G$ is
\emph{$P_3$-connected} if it is connected and for any two edges
$e,f\in E(G)$ there is a sequence $e_0$, $e_1$, \ldots, $e_k$ of edges
of $G$ such that $e_0=e$, $e_k=f$, and for all $i\in\{0,\ldots,k-1\}$
$e_i\sim e_{i+1}$.  (In other words, $G$ is $P_3$-connected if it is
connected and $E(G)$ is the unique class of the equivalence closure of
$\sim$.)

\begin{lemma}\label{lem:P3conn}
Let $G$ be a bull-free graph and let $F$ be a $P_3$-connected induced
subgraph of $G$.  Suppose that there are adjacent vertices $x,y$ in
$G\setminus F$ such that $x$ is anticomplete to $F$, and $y$ has two
adjacent neighbors in $F$.  Then $y$ is complete to $F$.
\end{lemma}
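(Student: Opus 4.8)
The plan is to argue by contradiction: suppose $y$ is not complete to $F$, so there is a vertex $w\in V(F)$ with $yw\notin E(G)$. Since $F$ is $P_3$-connected, I want to ``walk'' from a pair of adjacent neighbors of $y$ in $F$ to the vertex $w$ along a sequence of edges related by $\sim$, and show that at the first point where $y$ fails to be complete to an edge, the vertex $x$ together with $y$ and that edge produces a forbidden bull. So the first step is to define $W=\{v\in V(F)\mid yv\in E(G)\}$ and to record the hypothesis that $W$ contains two adjacent vertices, say $u_1u_2\in E(F)$ with $u_1,u_2\in W$.

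The key reduction is a local claim: if $e=ab$ and $f=bc$ are edges of $F$ with $e\sim f$ (so $ac\notin E(G)$), and $y$ is complete to $\{a,b\}$, then $y$ is adjacent to $c$. Indeed, if $yc\notin E(G)$, then consider the five vertices $\{x,y,a,b,c\}$ (using $x\in G\setminus F$ adjacent to $y$ and anticomplete to $F$): the edges present are $xy$, $ya$, $yb$, $ab$, $bc$, and the non-edges include $ac$, $xa$, $xb$, $xc$, $yc$; this is exactly a bull (with $y$ the vertex of degree $3$ playing the role of one of the ``horn'' vertices appropriately — one should just check the adjacency pattern matches the bull on vertices $a,b,c,d,e$ with edges $ab,bc,cd,be,ce$). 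This contradicts bull-freeness, proving the local claim. I would state and prove this as a displayed claim.

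Now the global argument: let $e_0,e_1,\ldots,e_k$ be a $\sim$-sequence in $F$ with $e_0=u_1u_2$ and $e_k$ an edge incident to $w$ (such a sequence exists because $F$ is $P_3$-connected: pick any edge incident to $w$ as $e_k$ and any edge through which $y$ is complete — e.g. $u_1u_2$ — as $e_0$). By the local claim and induction on $i$, if $y$ is complete to $e_{i}$ then $y$ is complete to $e_{i+1}$: writing $e_i=ab$, $e_{i+1}=bc$ with common vertex $b$ and $ac\notin E(G)$, we already have $ya,yb\in E(G)$, and the claim gives $yc\in E(G)$, so $y$ is complete to $e_{i+1}$. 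Since $y$ is complete to $e_0$, it is complete to every $e_i$, in particular to $e_k$, hence adjacent to $w$ — contradicting $yw\notin E(G)$. Therefore $y$ is complete to $F$.

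The main obstacle is purely bookkeeping: one must be careful that consecutive edges in the $\sim$-sequence share a vertex in the right way so that ``$y$ complete to $e_i$'' genuinely feeds the local claim (the shared vertex $b$ must be one of the two vertices to which $y$ is already known adjacent), and one must double-check that the induced subgraph on $\{x,y,a,b,c\}$ is precisely a bull and not, say, a $P_5$ or something with an extra edge — this hinges on $x$ being anticomplete to $F$ and on $ac$ being a non-edge, both of which are guaranteed. There is no real structural difficulty beyond verifying this five-vertex configuration once and then running the induction.
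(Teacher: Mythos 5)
Your proof is correct and follows essentially the same route as the paper: both walk along a $\sim$-sequence of edges of $F$ from an edge whose ends are neighbors of $y$ to an edge containing the alleged non-neighbor, and obtain the same bull on $x$, $y$ and the three vertices of the $P_3$ at the first edge where completeness of $y$ fails (your induction with the local claim is just this first-failure argument rephrased). No gaps; the verification of the five-vertex configuration is exactly what the paper does.
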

\begin{proof}
Let $a,b$ be two adjacent neighbors of $y$ in $F$.  Suppose that $y$
has a non-neighbor $c$ in $F$.  Since $F$ is $P_3$-connected, there is
a sequence $e_0$, $e_1$, \ldots, $e_k$ of edges of $F$ such that
$e_0=\{a,b\}$, $e_k$ contains $c$, and for all $i\in\{0,\ldots,k-1\}$
the edges $e_i$ and $e_{i+1}$ have a common vertex and $e_i\cup
e_{i+1}$ induces a $P_3$.  Then there is an integer $i$ such that $y$
is complete to the two ends of $e_i$ and not complete to the two ends
of $e_{i+1}$, say $e_i=uv$ and $e_{i+1}=vw$; but then $\{x,y,u, v,
w\}$ induces a bull, a contradiction.
\end{proof}

We define six more graphs as follows (see Figure~\ref{fig:F1F6}):
\begin{itemize}
\itemsep=0em
\item
Let $F_1$ be the graph with vertices $v_1,\ldots,v_6$ and edges
$v_1v_2$, $v_2v_3$, $v_3v_4$, $v_4v_5$ and $v_6v_i$ for all
$i\in\{1,\ldots,5\}$.
\item
Let $F_2$ be the graph obtained from $F_1$ by adding the edge
$v_1v_5$.
\item
Let $F_3$ be the graph with vertices $v_1,\ldots,v_6$ and edges
$v_1v_2$, $v_1v_3$, $v_2v_3$, $v_2v_4$, $v_3v_4$, $v_3v_5$, $v_4v_5$,
$v_4v_6$ and $v_5v_6$.
\item
Let $F_4$ be the graph obtained from $F_3$ by adding the edge
$v_1v_6$.
\item
Let $F_5=\overline{C_6}$.
\item
Let $F_6$ be the graph with vertices $v_1,\ldots,v_7$ and edges
$v_1v_2$, $v_2v_3$, $v_3v_4$, $v_4v_5$, $v_5v_1$, $v_6v_1$, $v_6v_2$,
$v_6v_3$, $v_6v_5$, $v_7v_2$, $v_7v_3$, $v_7v_4$, $v_7v_5$ and
$v_7v_6$.
\end{itemize}

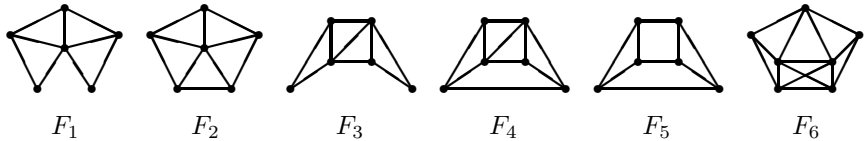
\begin{figure}[ht]
\unitlength=0.06cm
\thicklines
\begin{center}
\begin{tabular}{cccccc}


\begin{picture}(24,30) 
\multiput(6,6)(12,0){2}{\vertex}
\multiput(0,18)(24,0){2}{\vertex}
\multiput(12,15)(0,9){2}{\vertex}
\put(12,15){\line(0,1){9}}
\put(18,6){\line(1,2){6}} \put(6,6){\line(-1,2){6}}
\put(12,15){\line(4,1){12}} \put(12,15){\line(-4,1){12}}
\put(12,24){\line(2,-1){12}} \put(12,24){\line(-2,-1){12}}
\put(12,15){\line(2,-3){6}}\put(12,15){\line(-2,-3){6}}
\put(9,-4){$F_1$}
\end{picture}

&
\begin{picture}(24,30) 
\multiput(6,6)(12,0){2}{\vertex}
\multiput(0,18)(24,0){2}{\vertex}
\multiput(12,15)(0,9){2}{\vertex}
\put(6,6){\line(1,0){12}}
\put(12,15){\line(0,1){9}}
\put(18,6){\line(1,2){6}} \put(6,6){\line(-1,2){6}}
\put(12,15){\line(4,1){12}} \put(12,15){\line(-4,1){12}}
\put(12,24){\line(2,-1){12}} \put(12,24){\line(-2,-1){12}}
\put(12,15){\line(2,-3){6}}\put(12,15){\line(-2,-3){6}}
\put(9,-4){$F_2$}
\end{picture}

&
\begin{picture}(27,30) 
\multiput(0,6)(27,0){2}{\vertex}
\multiput(9,12)(9,0){2}{\vertex}
\multiput(9,21)(9,0){2}{\vertex}
\multiput(9,12)(0,9){2}{\line(1,0){9}}
\multiput(9,12)(9,0){2}{\line(0,1){9}}
\put(0,6){\line(3,2){9}} \put(27,6){\line(-3,2){9}}
\put(0,6){\line(3,5){9}} \put(27,6){\line(-3,5){9}}
\put(9,12){\line(1,1){9}}
\put(10,-4){$F_3$}
\end{picture}

&
\begin{picture}(27,30) 
\multiput(0,6)(27,0){2}{\vertex}
\multiput(9,12)(9,0){2}{\vertex}
\multiput(9,21)(9,0){2}{\vertex}
\put(0,6){\line(1,0){27}}
\multiput(9,12)(0,9){2}{\line(1,0){9}}
\multiput(9,12)(9,0){2}{\line(0,1){9}}
\put(0,6){\line(3,2){9}} \put(27,6){\line(-3,2){9}}
\put(0,6){\line(3,5){9}} \put(27,6){\line(-3,5){9}}
\put(9,12){\line(1,1){9}}
\put(10,-4){$F_4$}
\end{picture}

&
\begin{picture}(27,30) 
\multiput(0,6)(27,0){2}{\vertex}
\multiput(9,12)(9,0){2}{\vertex}
\multiput(9,21)(9,0){2}{\vertex}
\put(0,6){\line(1,0){27}}
\multiput(9,12)(0,9){2}{\line(1,0){9}}
\multiput(9,12)(9,0){2}{\line(0,1){9}}
\put(0,6){\line(3,2){9}} \put(27,6){\line(-3,2){9}}
\put(0,6){\line(3,5){9}} \put(27,6){\line(-3,5){9}}
\put(10,-4){$F_5$}
\end{picture}

&
\begin{picture}(24,30) 
\multiput(6,6)(12,0){2}{\vertex}
\multiput(6,12)(12,0){2}{\vertex}
\multiput(0,18)(24,0){2}{\vertex}
\put(12,24){\vertex}
\multiput(6,6)(0,6){2}{\line(1,0){12}}
\put(18,6){\line(1,2){6}} \put(6,6){\line(-1,2){6}}
\multiput(6,12)(6,12){2}{\line(2,-1){12}}
\multiput(6,6)(-6,12){2}{\line(2,1){12}}
\multiput(6,6)(12,0){2}{\line(0,2){6}}
\put(6,12){\line(-1,1){6}} \put(18,12){\line(1,1){6}}
\put(18,12){\line(-1,2){6}}\put(6,12){\line(1,2){6}}

\put(9,-4){$F_6$}
\end{picture}
\\
\end{tabular}
\end{center}
\caption{Graphs $F_1, \ldots, F_6$}
\label{fig:F1F6}
\end{figure}

Recall the graph $F_0$ defined at the beginning of
Section~\ref{sec:broom}.  It is easy to check that each of $F_3$,
$F_4$, $F_5$, $F_6$ and $F_0$ is $P_3$-connected.  (Actually it
follows from \cite{Gallai} that every prime graph is $P_3$-connected.)

\begin{lemma}\label{lem:Fi}
Let $G$ be a quasi-prime bull-free graph that contains no $K_5$ and no
double wheel.  Let $F$ be an induced subgraph of $G$.  Then:
\begin{itemize}
\item
If $F$ is (isomorphic to) $F_0$, then $F$ is a magnet in $G$.
\item
If $G$ is $F_0$-free, and $F$ induces a gem, with vertices $v_1,
\ldots, v_5$ and edges $v_1v_2$, $v_2v_3$, $v_3v_4$ and $v_5v_i$ for
each $i\in\{1,\ldots,4\}$, then either $F$ is a magnet or some vertex
in $G\setminus F$ is complete to $\{v_1,v_4\}$ and anticomplete to
$\{v_2,v_3,v_5\}$.
\item
If $G$ is $F_0$-free, and $F$ is (isomorphic to) any of $F_1$, \ldots,
$F_6$, then $F$ is a magnet in $G$.
\end{itemize}
\end{lemma}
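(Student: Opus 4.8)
The plan is to prove all three items by one scheme: assuming the subgraph $F$ in question is not a magnet, some vertex $x\in G\setminus F$ has $N_F(x)$ equal to a stable set of $F$, and I would derive a contradiction --- except for one configuration in the gem case, which is exactly the alternative stated there. Throughout I would use, besides bull-freeness, that $G$ is connected, $\overline G$ is connected, and $G$ is $P_6$-free, quasi-prime, $K_5$-free and double-wheel-free (the standing hypotheses secured by Lemma~\ref{lem:abc}), together with the two tools Lemma~\ref{lem:broom} and Lemma~\ref{lem:P3conn}. First I would dispose of the case $N_F(x)=\emptyset$ uniformly: since $G$ is connected, a shortest path from $x$ to $F$ gives adjacent vertices $p,q$ with $p$ anticomplete to $F$ and $q$ having a neighbour in $F$, hence (by the non-empty cases below) two adjacent neighbours in $F$; when $F$ is $P_3$-connected --- which, as recalled just before the lemma, holds for $F_0,F_3,F_4,F_5,F_6$ --- Lemma~\ref{lem:P3conn} makes $q$ complete to $F$, which is impossible: for $F=F_0$ this gives an induced double wheel, because $F_0$ contains the wheel on the $C_5$ $v_1v_2v_3v_4v_5$ with hub $v_6$, so $\{q,v_1,\ldots,v_6\}$ induces a double wheel, and the other $P_3$-connected cases are similar; for the gem, $F_1$ and $F_2$ (which are not $P_3$-connected) the anticomplete case is absorbed into their own analysis.

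For $F\cong F_0$, up to the order-two automorphism of $F_0$ there are only a handful of non-empty stable sets, and in each case I would exhibit five vertices inducing a bull --- for instance $\{x,v_7,v_1,v_2,v_5\}$ when $N_{F_0}(x)=\{v_7\}$, $\{x,v_1,v_2,v_6,v_4\}$ when $N_{F_0}(x)=\{v_1,v_3\}$, and $\{x,v_1,v_3,v_5,v_6\}$ when $N_{F_0}(x)$ is $\{v_1,v_4\}$ or $\{v_5,v_7\}$; the singleton cases and the remaining pairs are of the same type. This contradicts bull-freeness, so $F_0$ is a magnet.

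For $F$ inducing the gem, the non-empty stable sets are the singletons, $\{v_1,v_3\}$, $\{v_2,v_4\}$ and $\{v_1,v_4\}$. If $N_F(x)=\{v_1,v_4\}$ then $x$ is complete to $\{v_1,v_4\}$ and anticomplete to $\{v_2,v_3,v_5\}$, which is the stated alternative. If $N_F(x)=\{v_5\}$ then $\{x,v_1,v_2,v_3,v_4,v_5\}$ induces a broom, with $x$ in the role of the pendant vertex, and Lemma~\ref{lem:broom} applies: its first outcome contradicts quasi-primality (the set $\{v_1,v_2,v_3,v_4\}$ is not a clique), its third outcome is excluded since it produces an induced $F_0$, and its second outcome gives a vertex complete to $\{v_1,v_4\}$ and anticomplete to $\{v_2,v_3,v_5\}$ --- again the alternative. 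In every remaining case, up to the symmetry of the gem the five vertices $\{x,v_1,v_2,v_4,v_5\}$ induce a bull (for example when $N_F(x)$ is $\{v_1\}$, $\{v_2\}$ or $\{v_1,v_3\}$), a contradiction. Hence either the gem is a magnet or the alternative holds.

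For $F$ among $F_1,\ldots,F_6$, assuming $G$ is $F_0$-free, I would note that each of $F_1,F_2,F_3,F_4,F_6$ contains an induced gem $\Gamma$: for instance $\{v_1,v_2,v_3,v_4,v_6\}$ in $F_1$ and $F_2$, the set $\{v_1,\ldots,v_5\}$ in $F_3$ and $F_4$ (where $v_3$ dominates the induced path $v_1v_2v_4v_5$), and $\{v_2,v_3,v_4,v_5,v_7\}$ in $F_6$. If $\Gamma$ is a magnet then so is $F_i\supseteq\Gamma$. Otherwise, suppose $F_i$ is not a magnet, witnessed by some $x$; then $N_\Gamma(x)$ is a stable set of $\Gamma$, so by the gem case $N_\Gamma(x)$ is one of the exceptional sets, and one reaches a contradiction by bringing in the vertices of $F_i$ outside $\Gamma$ together with $F_0$-freeness. (For instance, in $F_1$ with $\Gamma=\{v_1,v_2,v_3,v_4,v_6\}$, if $x$ is anticomplete to $v_5$ then $\{x,v_1,v_2,v_5,v_6\}$ induces a bull, while if $x$ is adjacent to $v_5$ then $v_4,v_5$ are adjacent neighbours of $x$ in $F_1$, contradicting that $x$ witnesses $F_i$ not being a magnet.) Finally $F_5=\overline{C_6}$ contains no induced gem and is treated directly: a stable set of $\overline{C_6}$ corresponds to a clique of $C_6$, hence is empty, a single vertex, or an edge of the $C_6$, and up to symmetry each non-empty case yields a bull. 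The main difficulty throughout is the bookkeeping of the many small cases and, in particular, closing off the anticomplete and degenerate cases: for the $P_3$-connected graphs this is done uniformly via Lemma~\ref{lem:P3conn} and double-wheel- / $K_5$-freeness, whereas for the gem, $F_1$ and $F_2$ one must fall back on the explicit bull and broom configurations above.
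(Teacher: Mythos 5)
Your handling of the $F_0$ case and of the non-empty stable neighbourhoods of the gem is essentially the paper's argument, and the bull configurations you quote there check out. The genuine gap is in how you dispose of a vertex anticomplete to $F$ for $F\in\{F_3,F_4,F_5\}$. After Lemma~\ref{lem:P3conn} makes the neighbour $q$ of the anticomplete vertex $x$ complete to $F$, you claim the resulting contradiction is ``similar'' to the double wheel obtained for $F_0$. It is not: $F_3$, $F_4$ and $F_5=\overline{C_6}$ have clique number $3$ and contain no induced $C_5$, so a vertex complete to them creates neither a $K_5$ nor a double wheel; in fact one can check that $F_3$ together with a dominating vertex $q$ and a pendant neighbour $x$ of $q$ contains no bull at all, so no local contradiction of any kind is available. (Only $F_0$ and $F_6$ are easy: $F_0$ yields the double wheel, and for $F_6$ the set $\{v_2,v_3,v_6,v_7,q\}$ is a $K_5$.) This is precisely where the paper does its real work: it takes a $P_4$ of $F$, observes that this $P_4$ together with $q$ and $x$ induces a broom, applies Lemma~\ref{lem:broom} (using quasi-primality and $F_0$-freeness) to obtain a vertex $z$ with prescribed adjacencies to the $P_4$, to $x$ and to $q$, and then chases bulls involving $x$, $q$, $z$ and the remaining vertices of $F$. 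Nothing playing this role appears in your proposal, and without it the third item is unproven for $F_3$, $F_4$, $F_5$.

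The same weakness undermines the residual subcases of your reduction of $F_1,\ldots,F_4,F_6$ to the gem. When the witness $x$ has $N_\Gamma(x)=\emptyset$ or $N_\Gamma(x)=\{\mbox{apex}\}$, the second item only gives the existence of some \emph{other} vertex $z$ with prescribed adjacency to $\Gamma$, and your promise that ``bringing in the vertices of $F_i$ outside $\Gamma$'' finishes the job is unsubstantiated: for $F_1$ this can indeed be closed by two short bull arguments with $v_5$ (as the paper does), and the paper's $F_2$ argument moreover needs $K_5$-freeness, but for $F_3$ and $F_4$ the analogous $z$ (complete to $\{v_1,v_5\}$ of the gem $\{v_1,v_2,v_4,v_5\}$ with apex $v_3$, anticomplete to $\{v_2,v_3,v_4\}$) creates no bull inside $F\cup\{z\}$ whatever its adjacency to $v_6$; one must again involve $x$ and global structure, i.e.\ essentially redo the broom argument. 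Finally, the gem's own anticomplete case, which you say is ``absorbed into their own analysis'', is never actually treated: the second item must produce the special vertex even when some vertex is anticomplete to the gem, and the paper needs a separate argument there (connectivity, a bull chase or Lemma~\ref{lem:P3conn} applied to the $P_4$ $v_1$-$v_2$-$v_3$-$v_4$ to make the neighbour complete to it, and then Lemma~\ref{lem:broom}); since your proofs of the third item rely on the second, this omission propagates. A small additional slip: for $N_F(x)=\{v_2,v_4\}$ the set $\{x,v_1,v_2,v_4,v_5\}$ induces a $C_5$, not a bull; you must apply the gem's reflection before quoting that configuration.
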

\begin{proof}
We use the same notation as in the definition of $F_0$, $F_1$, \ldots,
$F_6$.

First suppose that $F$ is isomorphic to $F_0$.  Suppose that $F$ is
not a magnet, so there is a vertex $z$ in $G\setminus F$ such that
$N_F(z)$ is a stable set.  We claim that every such vertex satisfies
$N_F(z)=\emptyset$.  For suppose not.  If $z$ is adjacent to $v_1$,
then it is also adjacent to $v_3$, for otherwise $\{z,v_1, v_5,v_6,
v_3\}$ induces a bull, and to $v_4$, for otherwise $\{z,v_1,v_2, v_6,
v_4\}$ induces a bull; but then $N_F(z)$ is not a stable set.  So $z$
is not adjacent to $v_1$, and, by a similar argument (not using
$v_7$), $z$ is not adjacent to any of $v_2$, $v_3$, $v_4$ or $v_5$.
Then $z$ is not adjacent to $v_7$, for otherwise $\{z,v_7,v_3, v_4,
v_5\}$ induces a bull, and also not adjacent to $v_6$, for otherwise
$\{z,v_6,v_5,v_4,v_7\}$ induces a bull.  So the claim holds.  Since
$G$ is connected, there are adjacent vertices $x,y$ in $G\setminus F$
such that $N_F(x)=\emptyset$ and $N_F(y)\neq\emptyset$.  By the same
proof as for the claim, $N_F(y)$ is not a stable set.  Since $F_0$ is
$P_3$-connected, Lemma~\ref{lem:P3conn} implies that $y$ is complete
to $V(F)$.  But then $(V(F)\setminus \{v_7\})\cup\{y\}$ induces a
double wheel, a contradiction.  This proves the first item of the
lemma.

\medskip

Now we prove the second item of the lemma.  Let $F$ have vertices
$v_1, \ldots, v_5$ and edges $v_1v_2$, $v_2v_3$, $v_3v_4$ and $v_5v_i$
for each $i\in\{1,\ldots,4\}$.  Suppose that $F$ is not a magnet; so
there is a vertex $y$ such that $N_F(y)$ is a stable set.  First
suppose that $N_F(y)\neq\emptyset$.  If $y$ is adjacent to $v_5$, then
$F\cup \{y\}$ induces broom.  By Lemma~\ref{lem:broom} and since $G$
is quasi-prime (so $G$ cannot have a homogeneous set that contains the
four vertices of a $P_4$) and $G$ contains no $F_0$, there is a vertex
$z$ complete to $\{v_1,v_4\}$ and anticomplete to $\{v_2,v_3,v_5\}$,
and so the desired result holds.  Now suppose that $y$ is not adjacent
to $v_5$; so, up to symmetry, $y$ has exactly one neighbor in
$\{v_1,v_2\}$.  Then $y$ is adjacent to $v_4$, for otherwise
$\{y,v_1,v_2,v_5,v_4\}$ induces a bull, so $y$ has exactly one
neighbor in $\{v_3,v_4\}$, and by symmetry $y$ is adjacent to $v_1$.
So the desired result holds.  Now suppose that $N_F(y)=\emptyset$.
Since $G$ is connected there is an edge $uv$ such that
$N_F(u)=\emptyset$ and $N_F(v)\neq\emptyset$.  By the preceding
argument we may assume that $N_F(v)$ is not a stable set.  Suppose
that $v$ is adjacent to $v_5$.  Up to symmetry, $v$ is also adjacent
to a vertex $w\in\{v_1,v_2\}$.  Then $v$ is adjacent to $v_4$, for
otherwise $\{u,v,w,v_5,v_4\}$ induces a bull, and, by symmetry, to
$v_1$, and also to $v_2$, for otherwise $\{u,v,v_4,v_5,v_2\}$ induces
a bull, and, by symmetry, to $v_3$.  Hence $\{u,v,v_1,v_2, v_3,v_4\}$
induces a broom, so by Lemma~\ref{lem:broom} there is a vertex $y$
complete to $\{v_1,v_4\}$ and anticomplete to $\{v_2,v_3,v\}$, and so
the desired result holds.  Now suppose that $v$ is not adjacent to
$v_5$.  Then $v$ is adjacent to two adjacent vertices in $\{v_1,v_2,
v_3,v_4\}$, and since $G[\{v_1,v_2, v_3,v_4\}]$ is $P_3$-connected
Lemma~\ref{lem:P3conn} implies that $v$ is complete to $\{v_1,v_2,
v_3,v_4\}$, so $\{u,v,v_1,v_2, v_3,v_4\}$ induces a broom again and we
can conclude as above.

\medskip

Now we prove the third item of the lemma.  First let $F=F_1$, with the
same notation as in the definition.  Suppose that $F$ is not a magnet.
In particular the gem induced by $F\setminus\{v_5\}$ is not a magnet,
so, by the second item of this lemma, there is a vertex $z$ complete
to $\{v_1, v_4\}$ and anticomplete to $\{v_2,v_3,v_6\}$.  If $z$ is
not adjacent to $v_5$, then $\{z, v_4, v_5, v_6, v_2\}$ induces a
bull.  If $z$ is adjacent to $v_5$, then $\{v_1,z,v_5,v_4,v_3\}$
induces a bull, a contradiction.

\medskip

Now let $F=F_2$, with the same notation as in the definition.  Suppose
that $F$ is not a magnet.  For each $i\in\{1,\ldots,5\}$ the gem
induced by $F\setminus\{v_i\}$ is not a magnet, so, by the second item
of this lemma, there is a vertex $z_i$ complete to $\{v_{i-1},
v_{i+1}\}$ and anticomplete to $\{v_{i-2},v_{i+2},v_6\}$.  Then $z_i$
is adjacent to $v_i$, for otherwise $\{z_i, v_{i-1}, v_i, v_6,
v_{i+2}\}$ induces a bull.  The vertices $z_1, \ldots, z_5$ are
pairwise distinct because the sets $N_F(z_i)$ are pairwise different.
Since $G$ contains no $K_5$, the set $\{z_1, \ldots, z_5\}$ is not a
clique, so, up to symmetry, $z_1$ is non-adjacent to either $z_2$ or
$z_3$.  If $z_1$ is not adjacent to $z_2$, then $\{z_1, v_2, z_2, v_3,
v_4\}$ induces a bull.  If $z_1$ is not adjacent to $z_3$, then
$\{z_1, v_5, v_6, v_4, z_3\}$ induces a bull, a contradiction.

\medskip

Now let $F=F_3$.  (When $F$ is $F_4$, $F_5$ or $F_6$ the proof is
similar and we omit the details.)  Suppose that there is a vertex $z$
in $G\setminus F$ such that $N_F(z)$ is a stable set.  We claim that
every such vertex satisfies $N_F(z)=\emptyset$.  For suppose not.  If
$z$ is adjacent to $v_1$, then $z$ is adjacent to $v_5$, for otherwise
$\{z,v_1,v_2,v_3,v_5\}$ induces a bull; but then $\{z,v_5,
v_6,v_4,v_2\}$ induces a bull.  So, and by symmetry, $z$ has no
neighbor in $\{v_1,v_6\}$.  If $z$ has a neighbor in $\{v_2,v_3\}$,
then $\{z,v_2,v_3, v_4,v_6\}$ induces a bull.  So, and by symmetry,
$z$ has no neighbor in $\{v_2,v_3,v_4,v_5\}$.  Thus the claim holds.
(The same claim holds when $F$ is $F_4$, $F_5$ or $F_6$ and we omit
the details.)  Since $G$ is connected, there are adjacent vertices
$x,y$ in $G\setminus F$ such that $N_F(x)=\emptyset$ and
$N_F(y)\neq\emptyset$.  By the same argument as for the claim,
$N_F(y)$ is not a stable set.  Since $F$ is $P_3$-connected,
Lemma~\ref{lem:P3conn} implies that $y$ is complete to $F$.  Note that
$v_1$-$v_3$-$v_4$-$v_6$ is an induced $P_4$ in $F$.  (When $F=F_4$,
use the $P_4$ $v_2$-$v_1$-$v_6$-$v_5$; when $F=F_5$ use any $P_4$ of
$F$; when $F=F_6$ use the $P_4$ $v_1$-$v_2$-$v_3$-$v_4$.)  Then
$\{v_1,v_3,v_4,v_6,y,x\}$ induces a broom, so, since $G$ is
quasi-prime and contains no $F_0$, Lemma~\ref{lem:broom} implies the
existence of a vertex $z$ that is complete to $\{v_1,v_6,x\}$ and
anticomplete to $\{v_3,v_4,y\}$.  Clearly, $z\notin V(F)\cup\{x,y\}$.
Then $z$ is not adjacent to $v_2$, for otherwise $\{x,z,v_1,v_2,v_4\}$
induces a bull; and similarly $z$ is not adjacent to $v_5$; but then
$\{z,v_1,v_2,y,v_5\}$ induces a bull.  (A similar contradiction occurs
when $F$ is $F_4$, $F_5$ or $F_6$ and we omit the details.)
\end{proof}

One can test in polynomial time whether a graph contains any of $F_0$,
$F_1, \ldots,$ $F_6$.  It follows from Lemmas~\ref{lem:magnet}
and~\ref{lem:Fi} that if $G$ is a quasi-prime $(P_6,\mbox{bull})$-free
graph that contains no $K_5$ and no double wheel and contains any of
$F_0$, $F_1, \ldots, F_6$, then the $4$-colorability of $G$ can be
decided in polynomial time.  Therefore we will assume that $G$
contains none of $F_0$, $F_1, \ldots, F_6$.

\section{The gem-free case}\label{sec:gemfree}

In this section we examine what happens when we impose the additional
constraint that the graph is gem-free.  In this case we are able to
deal with the $k$-coloring problem for any $k$.  The main purpose of
this section is to give the proof of Theorem~\ref{thm:kp6bgf}.  Before
doing that we need some other results.

\begin{theorem}\label{thm:gemfree}
Let $G$ be a prime $(P_6,\mbox{bull}, \mbox{gem})$-free graph that
contains a $C_5$.  Then $G$ is triangle-free.
\end{theorem}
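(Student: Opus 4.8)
The plan is to argue by contradiction: suppose $G$ is a prime $(P_6,\mbox{bull},\mbox{gem})$-free graph containing a $C_5$, say on vertices $c_1,\ldots,c_5$ with edges $c_ic_{i+1}$ (indices mod $5$), and suppose $G$ also contains a triangle. Since $G$ is prime it is connected, so I can look for a triangle that interacts with a fixed $C_5$, or more generally exploit connectivity to pull a triangle close to the $C_5$. The first reduction I would make is to understand how a single vertex $x$ outside the $C_5$ can attach to it. A routine case analysis (using bull-freeness and gem-freeness, and $P_6$-freeness when $x$ has few neighbors on the cycle) should show that the only possible nonempty neighborhoods $N_{C_5}(x)$ are: a single edge $\{c_i,c_{i+1}\}$; a single vertex; a ``cap'' of three consecutive vertices; the whole cycle; or possibly a $P_3$ — and I expect several of these to be ruled out (for instance, two nonadjacent vertices give a bull together with the rest of the cycle, three consecutive vertices together with the two remaining give structures one must check against the gem and bull). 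The cleanest target is to show that every vertex with a neighbor on the $C_5$ is either complete to the $C_5$ or complete to exactly one edge of it.

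Next I would analyze the set $W$ of vertices complete to $C_5$ and the ``private'' attachments. If some $x$ is complete to $C_5$ then $G[\{x,c_1,c_2,c_3,c_4\}]$ is a gem (path $c_1c_2c_3c_4$ plus $x$ adjacent to all four), contradicting gem-freeness. Hence no vertex is complete to the $C_5$, so every vertex with a neighbor on $C_5$ is complete to exactly one edge $\{c_i,c_{i+1}\}$ and anticomplete to the other three cycle vertices. Now for each $i$ let $A_i$ be the set of vertices complete to $\{c_i,c_{i+1}\}$ and anticomplete to the rest of the $C_5$. These five sets together with $\{c_1,\ldots,c_5\}$ account for all vertices having a neighbor on the cycle. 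I would then show each $A_i$ is a clique, or at least has very restricted structure, by producing a bull or a $P_6$ from a nonedge inside $A_i$ together with cycle vertices and neighbors. The key structural claim I am aiming at is that $V(G)$ decomposes in a way that either exhibits a proper homogeneous set (contradicting primeness) or forces the graph to be triangle-free: for example, if the triangle is entirely outside $N[C_5]$, connectivity gives a shortest path from it to the cycle and one pushes a bull or $P_6$ along that path; if the triangle meets some $A_i$, one derives a contradiction directly.

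The main obstacle I expect is controlling triangles that lie \emph{far} from the $C_5$ or that span several of the sets $A_i$ and $\{c_1,\ldots,c_5\}$. Handling the ``far'' case cleanly requires a good domination/connectivity argument: in a connected $P_6$-free graph a $C_5$ has a small dominating structure, and I would use that to bring any triangle within bounded distance of the cycle, then finish by local case analysis. The inter-$A_i$ case (a triangle with one vertex in $A_i$, one in $A_j$, one on the cycle, or with two vertices in distinct $A_i$'s) is where most of the bull-finding bookkeeping lives; the cross-adjacencies between $A_i$ and $A_j$ for $|i-j|=1$ versus $|i-j|=2$ behave differently, and each nonedge or edge between them must be tested against the bull and the gem. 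I would organize this as a sequence of short claims of the form ``$A_i$ is anticomplete to $A_j$'' or ``$A_i$ is complete to $A_{i+1}$'' and then observe that the resulting partition either yields a proper homogeneous set (so $G$ is not prime) or makes it impossible for any triangle to exist, completing the contradiction.
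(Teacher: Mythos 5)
Your central structural claim is false, and the rest of the plan rests on it. You aim to show that every vertex with a neighbor on the $C_5$ is complete to the whole cycle or to exactly one edge of it, and you then build the sets $A_i$ of vertices complete to $\{c_i,c_{i+1}\}$ and anticomplete to the other three cycle vertices. But such vertices cannot exist in a bull-free graph: if $N_{C_5}(x)=\{c_i,c_{i+1}\}$, then the triangle $\{x,c_i,c_{i+1}\}$ together with the pendants $c_{i-1}$ (at $c_i$) and $c_{i+2}$ (at $c_{i+1}$) induces a bull. So all your $A_i$ are empty, and your partition does not account for the vertices that really can attach to the cycle. A local check on $C_5\cup\{x\}$ shows the possible nonempty attachments are: exactly one vertex, two nonadjacent vertices, or three consecutive vertices; none of these is excluded by bull-, gem- or $P_6$-freeness of the six-vertex subgraph (for instance, $C_5$ plus a pendant vertex is prime and $(P_6,\mbox{bull},\mbox{gem})$-free), so they cannot be ruled out by the ``routine case analysis'' you invoke. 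The dangerous case — three consecutive neighbors, which creates a triangle meeting the cycle — is precisely one your decomposition never addresses, and it cannot be killed locally: the paper eliminates it only by taking a \emph{maximal} blow-up $U_1,\ldots,U_5$ of the $C_5$ and using primeness (a connected set of size at least two is not homogeneous, hence some vertex distinguishes an edge of it) together with maximality of the blow-up to reach a contradiction. That maximality device, which is what makes the claims ``each $U_i$ is stable'' and ``no triangle has two vertices in $U$'' provable, is absent from your plan.

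In addition, the treatment of triangles far from the cycle is only gestured at: the appeal to ``a small dominating structure'' for a $C_5$ in a connected $P_6$-free graph is not a statement you can cite as is, and you do not say how the local case analysis at the end of the pulled-back path would go. The paper handles this by taking a shortest path from $U$ to a triangle in the remainder, extracting induced $P_6$'s to force the path to have length one, and then applying the primeness/distinguisher argument to a component of a neighborhood. As written, your proposal is a sketch whose key intermediate statement is wrong and whose remaining steps (``each $A_i$ is a clique'', the inter-$A_i$ bookkeeping, the homogeneous-set endgame) are not carried out, so it does not constitute a proof of the theorem.
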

\begin{proof}
Since $G$ contains a $C_5$, there are five disjoint subsets $U_1,
\ldots, U_5$ of $V(G)$ such that the following properties hold for
each $i\in\{1,\ldots,5\}$, with subscripts modulo~$5$:
\begin{itemize}
\item
$U_i$ is anticomplete to $U_{i-2}\cup U_{i+2}$;
\item
$U_i$ contains a vertex that is complete to $U_{i-1}\cup U_{i+1}$.
\end{itemize}
Let $U=U_1\cup\cdots\cup U_5$ and $R=V(G)\setminus U$.  We choose
these sets so that the set $U$ is maximal with the above properties.
For each $i\in\{1,\ldots,5\}$ let $u_i$ be a vertex in $U_i$ that is
complete to $U_{i-1}\cup U_{i+1}$.  We observe that:
\begin{equation}\label{uis}
\mbox{Each of $U_1, \ldots, U_5$ is a stable set.}
\end{equation}
Proof: Suppose on the contrary and up to symmetry that $U_1$ is not a
stable set.  So $G[U_1]$ has a component $X$ of size at least $2$.
Since $G$ is prime, $X$ is not a homogeneous set, so there is a vertex
$z\in V(G)\setminus X$ and two vertices $x,y\in X$ such that $z$ is
adjacent to $y$ and not to $x$, and since $X$ is connected we may
choose $x$ and $y$ adjacent.  Suppose that $z$ is adjacent to $u_2$.
Then $z$ is adjacent to $u_5$, for otherwise $\{u_5,x,u_2,z,y\}$
induces a gem.  Then $z$ has no neighbor $v$ in $U_3$, for otherwise
$\{v,z,y,x,u_2\}$ induces a gem, and by symmetry $z$ has no neighbor
in $U_4$.  But then the $5$-tuple $(U_1\cup\{z\}$, $U_2$, $U_3$,
$U_4$, $U_5)$ contradicts the maximality of $U$ (since $u_2$ and $u_5$
are complete to $U_1\cup\{z\}$).  So $z$ is not adjacent to $u_2$,
and, by symmetry, $z$ is not adjacent to $u_5$.  Then $z$ is adjacent
to $u_3$, for otherwise $\{z,y,x,u_2,u_3\}$ induces a bull.  By
symmetry $z$ is adjacent to $u_4$.  But now $\{z,u_2, u_3, u_4, u_5\}$
induces a bull.  So (\ref{uis}) holds.

\medskip

It follows easily from the definition of the sets $U_1,\ldots,U_5$ and
(\ref{uis}) that $G[U]$ contains no triangle.  Moreover:
\begin{equation}\label{notuur}
\mbox{There is no triangle $\{x,y,z\}$ with $x,y\in U$ and $z\in R$.}
\end{equation}
Proof: Suppose the contrary.  By (\ref{uis}) and up to symmetry, let
$x\in U_1$ and $y\in U_2$.  Then $z$ is adjacent to exactly one of
$u_3,u_5$, for otherwise $\{u_5,x,y,z,u_3\}$ induces a bull or a gem.
Up to symmetry we may assume that $z$ is adjacent to $u_3$ and not to
$u_5$.  Then $z$ has no neighbor $v\in U_4$, for otherwise
$\{x,y,u_3,v,z\}$ induces a gem; and $z$ is adjacent to $u_1$, for
otherwise $\{u_1,y,z,u_3,u_4\}$ induces a bull; and $z$ has no
neighbor $v\in U_5$, for otherwise $\{v,x,y,u_3,z\}$ induces a gem.
It follows that the $5$-tuple $(U_1$, $U_2\cup\{z\}$, $U_3$, $U_4$,
$U_5)$ contradicts the maximality of $U$ (since $u_1$ and $u_3$ are
complete to $U_2\cup\{z\}$).  So (\ref{notuur}) holds.

\begin{equation}\label{noturr}
\mbox{There is no triangle $\{x,y,z\}$ with $x\in U$ and $y,z\in R$.}
\end{equation}
Proof: Suppose the contrary.  Up to symmetry, let $x\in U_1$.  Let $X$
be the component of $N(x)$ that contains $y,z$.  Since $G$ is prime,
$X$ is not a homogeneous set, so there is a vertex $t$ with a neighbor
and a non-neighbor in $X$, and since $X$ is connected and up to
relabelling we may assume that $t$ is adjacent to $y$ and not to~$z$.
Vertex $t$ is not adjacent to $x$, by the definition of $X$.  By
(\ref{notuur}), $y$ and $z$ have no neighbor in $\{u_2,u_5\}$.  Then
$t$ is adjacent to $u_2$, for otherwise $\{t,y,z,x,u_2\}$ induces a
bull, and by symmetry $t$ is adjacent to $u_5$.  If $t$ is adjacent to
$u_3$, then it is also adjacent to $u_4$, for otherwise
$\{x,u_2,t,u_3,u_4\}$ induces a bull; but then $\{u_2,u_3,u_4,u_5,t\}$
induces a gem.  So $t$ is not adjacent to $u_3$, and, by symmetry, $t$
is not adjacent to $u_4$.  If $y$ is adjacent to $u_3$, then $z$ is
adjacent to $u_3$, for otherwise $\{x,y,z,u_3,u_5\}$ induces a bull;
but then $\{u_3,y,z,u_4,t\}$ induces a bull.  So $y$ is not adjacent
to $u_3$, and also not to $u_4$ by symmetry, and similarly $z$ has no
neighbor in $\{u_3,u_4\}$.  But then $u_3$-$u_4$-$u_5$-$t$-$y$-$z$ is
an induced $P_6$, a contradiction.  So (\ref{noturr}) holds.

\begin{equation}\label{notrrr}
\mbox{There is no triangle $\{x,y,z\}$ with $x,y,z\in R$.}
\end{equation}
Proof: Suppose there is a such a triangle.  Since $G$ is prime it is
connected, so there is a shortest path $P$ from $U$ to a triangle
$T=\{x,y,z\}\subseteq R$.  Let $P=p_0$-$\cdots$-$p_k$, with $p_0\in
U$, $p_1, \ldots, p_k\in R$, and $p_k=x$, and $k\ge 1$.  We may assume
that $p_0\in U_1$.  We observe that $y$ is not adjacent to $p_{k-1}$,
for otherwise $\{x,y,p_{k-1}\}$ is a triangle and $P\setminus p_k$ is
a shorter path than $P$; and $y$ has no neighbor $p_i$ in $P\setminus
\{p_k,p_{k-1}\}$, for otherwise $p_0$-$\cdots$-$p_i$ is a shorter path
than $P$ from $U$ to $T$.  Likewise, $z$ has no neighbor in
$P\setminus p_k$.  Moreover there is no edge between
$P\setminus\{p_0,p_1\}$ and $U$ for otherwise there is a path strictly
shorter than $P$ between $U$ and $T$.  By (\ref{notuur}) $p_1$ has no
neighbor in $\{u_2,u_5\}$ and has at most one neighbor in
$\{u_3,u_4\}$; by symmetry we may assume that $p_1$ is not adjacent to
$u_4$.  If $k\ge 3$, then $u_4$-$u_5$-$p_0$-$p_1$-$p_2$-$p_3$ is an
induced $P_6$.  If $k=2$, then $u_4$-$u_5$-$p_0$-$p_1$-$p_2$-$y$ is an
induced $P_6$.  So $k=1$.  Then $p_1$ is adjacent to $u_3$, for
otherwise $u_3$-$u_4$-$u_5$-$p_0$-$p_1$-$y$ is an induced $P_6$.  Let
$X$ be the component of $N(p_1)$ that contains $y,z$.  Since $G$ is
prime, $X$ is not a homogeneous set, so there is a vertex $t$ with a
neighbor and a non-neighbor in $X$, and since $X$ is connected and up
to relabelling we may assume that $t$ is adjacent to $y$ and not to
$z$.  Vertex $t$ is not adjacent to $x$, by the definition of $X$.
Then $t$ is adjacent to $p_0$, for otherwise $\{t,y,z,p_1,p_0\}$
induces a bull, and $t$ is adjacent to $u_3$, for otherwise
$\{t,y,z,p_1,u_3\}$ induces a bull.  By (\ref{notuur}), $t$ has no
neighbor in $\{u_4,u_5\}$.  Then $u_5$-$u_4$-$u_3$-$t$-$y$-$z$ is an
induced $P_6$.  So (\ref{notrrr}) holds.

\medskip

Claims (\ref{uis})--(\ref{notrrr}) imply the theorem.
\end{proof}

\paragraph*{Clique-width}

The \emph{clique-width} of a graph $G$ is an integer parameter $w$
which measures the complexity of constructing $G$ through a sequence
of certain operations; such a sequence is called a
\emph{$w$-expression}.  This concept was introduced in \cite{CER}.  We
will not recall here all the technical definitions associated with
clique-width, and we will only mention the results that we use.  Say
that a class of graphs $\cal C$ has \emph{bounded clique-width} if
there is a constant $c$ such that every graph in $\cal C$ has
clique-width at most $c$.

\begin{theorem}[\cite{CMR}]\label{thm:cw0}
If a class of graphs $\cal C$ has bounded clique-width $c$, and there
is a polynomial $f$ such that for every graph $G$ in $\cal C$ with $n$
vertices and $m$ edges a $c$-expression can be found in time
$O(f(n,m))$, then for fixed $k$ the $k$-coloring problem can be solved
in time $O(f(n,m))$ for every graph $G$ in $\cal C$.
\end{theorem}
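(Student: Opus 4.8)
The plan is to derive this from the general model-checking metatheorem of Courcelle, Makowsky and Rotics \cite{CMR}, so the only real thing to verify is that, for fixed $k$, the $k$-coloring problem is expressible in the appropriate logic. First I would write the property ``$G$ is $k$-colorable'' as a single sentence $\varphi_k$ of monadic second-order logic that quantifies only over vertices and over sets of vertices (the fragment usually called $\mathrm{MSO}_1$): the sentence $\varphi_k$ asserts the existence of sets $V_1,\dots,V_k$ with $V_1\cup\cdots\cup V_k=V(G)$ such that each $V_i$ is a stable set, i.e. ``$\forall u\,\forall v\,((u\in V_i\wedge v\in V_i)\rightarrow\neg\mathrm{adj}(u,v))$''. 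The length of $\varphi_k$ depends only on $k$, not on $G$.

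Next I would invoke \cite{CMR}: there is a function $g$ such that, given a graph $G$ with $n$ vertices and $m$ edges together with a $c$-expression defining $G$, and given an $\mathrm{MSO}_1$ sentence $\psi$, one can decide whether $G\models\psi$ in time $g(c,|\psi|)\cdot(n+m)$. Applying this with $\psi=\varphi_k$: since $\mathcal{C}$ has clique-width at most $c$ and $k$ is fixed, the factor $g(c,|\varphi_k|)$ is a constant, so the test runs in time $O(n+m)$.

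Finally I would assemble the algorithm for a given $G\in\mathcal{C}$: compute a $c$-expression for $G$ in time $O(f(n,m))$, which is possible by hypothesis, and then run the model-checking procedure for $\varphi_k$ on that expression in time $O(n+m)$. Since we may assume $f(n,m)=\Omega(n+m)$, the total running time is $O(f(n,m))$, as claimed. (Alternatively, and without appealing to any metatheorem, one can run the dynamic program directly along the $c$-expression, keeping for each subexpression the set of ``types'' of partial proper $k$-colorings, where a type records only, for each of the at most $c$ labels, which subset of $\{1,\dots,k\}$ occurs as colors on the vertices of that label; there are at most $2^{ck}$ types, each clique-width operation updates the set of realizable types in constant time, and for fixed $k$ and $c$ this again yields total time $O(n+m)$.)

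The only genuine subtlety worth flagging is that the metatheorem applies to $\mathrm{MSO}_1$ (quantification over vertices and vertex sets) and not to the richer logic $\mathrm{MSO}_2$, which also allows quantification over edge sets and whose model-checking is \emph{not} fixed-parameter tractable in clique-width; this causes no trouble here because the formulation of $k$-colorability above already lives in $\mathrm{MSO}_1$. Beyond quoting the metatheorem correctly and bookkeeping the constants depending on $k$ and $c$, there is no real obstacle.
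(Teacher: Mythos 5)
Your proposal is correct and takes essentially the same route the paper relies on: the paper does not prove this statement but cites it from Courcelle, Makowsky and Rotics, and your argument (expressing $k$-colorability as a fixed $\mathrm{MSO}_1$ sentence and applying their model-checking metatheorem, or equivalently running the direct dynamic program over the $c$-expression with the $2^{ck}$ label-to-colorset types) is the standard proof of that cited result. There is no gap; your caveat that the formulation must live in $\mathrm{MSO}_1$ rather than $\mathrm{MSO}_2$ is exactly the right point to flag.
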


\begin{theorem}[\cite{CMR,CO}]\label{thm:cw1}
The clique-width of a graph that has non-trivial prime subgraphs is
the maximum of the clique-width of its prime induced subgraphs.
\end{theorem}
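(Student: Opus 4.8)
The plan is to establish the two inequalities $\mathrm{cwd}(G)\ge M$ and $\mathrm{cwd}(G)\le M$, where $M$ denotes the maximum of $\mathrm{cwd}(H)$ taken over all prime induced subgraphs $H$ of $G$. Under the hypothesis that $G$ has a non-trivial prime subgraph, note first that every graph on at most two vertices is prime while no graph on three vertices is prime, so ``non-trivial'' means ``on at least four vertices''; and a prime graph $H$ on at least four vertices is not a cograph (a cograph on at least three vertices is a disjoint union, or a join, of two nonempty smaller cographs, and the part with at least two vertices is then a proper homogeneous set), so, since the graphs of clique-width at most $2$ are exactly the cographs, $\mathrm{cwd}(H)\ge 3$. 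In particular $M\ge 3$ and $M$ is well defined.

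The inequality $\mathrm{cwd}(G)\ge M$ is the easy direction: clique-width is monotone under taking induced subgraphs (a $w$-expression for $G$ restricts to a $w$-expression for any induced subgraph, by deleting the operations that introduce vertices outside it), and a prime induced subgraph is in particular an induced subgraph, so $\mathrm{cwd}(H)\le\mathrm{cwd}(G)$ for every prime induced subgraph $H$, whence $M\le\mathrm{cwd}(G)$.

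For the reverse inequality $\mathrm{cwd}(G)\le M$ I would use modular decomposition. In the modular decomposition tree of $G$ each internal node is of one of three types: \emph{parallel} (its quotient is edgeless), \emph{series} (its quotient is complete), or \emph{prime} (its quotient is a prime graph, necessarily on at least four vertices); and $G$ is obtained from these quotients by iterated \emph{substitution} of a graph for a vertex. The key ingredient is the behaviour of clique-width under substitution (Courcelle--Olariu): if a graph is obtained from a graph $H$ by substituting a graph $G_v$ for each vertex $v$ of $H$, then its clique-width is at most $\max\bigl(2,\mathrm{cwd}(H),\max_v\mathrm{cwd}(G_v)\bigr)$. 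I would prove this by taking an optimal expression for $H$ and, each time it introduces a vertex $v$ with some label $i$, splicing in an optimal expression that builds $G_v$ and then recolouring every vertex of that copy of $G_v$ to the single label $i$; the later relabelling and edge-adding operations coming from the expression for $H$ then create precisely the edges of $G$ between distinct pieces $G_v$, while the internal edges of each $G_v$ have already been created, and one checks inductively that at every stage the set of vertices carrying a given label is exactly the union of the pieces $G_v$ whose image in $H$ carries that label (this is valid because $G_v$ is a module of the substitution, so all its vertices have the same adjacency to the outside). Applying this up the modular decomposition tree, together with $\mathrm{cwd}(K_n)\le 2$ at series nodes and $\mathrm{cwd}(\overline{K_n})\le 1$ at parallel nodes, yields $\mathrm{cwd}(G)\le\max\bigl(2,\max_Q\mathrm{cwd}(Q)\bigr)$, the second maximum running over the quotients $Q$ at the prime nodes of the tree.

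Finally I would compare this bound with $M$. Since $G$ has a non-trivial prime induced subgraph and being a cograph is a hereditary property, $G$ is not a cograph, so its modular decomposition tree has at least one prime node and the maximum $\max_Q\mathrm{cwd}(Q)$ is over a nonempty set. Each quotient $Q$ at a prime node is a prime graph on at least four vertices that is isomorphic to an induced subgraph of $G$ obtained by picking one vertex from each of the modules defining $Q$; hence $\mathrm{cwd}(Q)\le M$. Since moreover $M\ge 3>2$, we obtain $\mathrm{cwd}(G)\le\max\bigl(2,\max_Q\mathrm{cwd}(Q)\bigr)\le M$, which together with the easy direction gives $\mathrm{cwd}(G)=M$. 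The main obstacle is the substitution lemma for clique-width; the rest is routine bookkeeping with the modular decomposition tree and with the clique-widths of complete graphs, edgeless graphs, and cographs.
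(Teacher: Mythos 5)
Your argument is correct: the easy direction by heredity of clique-width under induced subgraphs, and the hard direction via the substitution lemma applied up the modular decomposition tree (with complete/edgeless quotients at series/parallel nodes and each prime quotient realized as an induced subgraph of $G$ by choosing one representative per module), together with the observation that a non-trivial prime graph contains a $P_4$ and hence forces the maximum $M\ge 3$, is exactly the standard proof. The paper itself gives no proof of this statement---it is quoted from \cite{CMR,CO}---and your reconstruction follows the same route as those references, so there is nothing to flag beyond the (harmless) convention issue of whether graphs on at most two vertices are called prime.
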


We will need two simple facts about $P_4$-free graphs.
\begin{theorem}\label{thm:p4f}
Every $P_4$-free graph satisfies the following two properties:
\begin{itemize}
\item
If $G$ has at least two vertices then it has a pair of twins
\cite{CLS}.
\item
$G$ has cliquewidth at most $2$ \cite{CO}.
\end{itemize}
\end{theorem}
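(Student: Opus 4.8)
The plan is to reduce both statements to the classical structural description of $P_4$-free graphs (cographs), which I take as known: a graph $G$ with at least two vertices is $P_4$-free if and only if $G$ or $\overline{G}$ is disconnected, or equivalently $G$ can be built from single vertices by repeatedly taking disjoint unions and joins (complete unions). Granting this, both items become routine inductions; there is essentially no ``theorem'' to prove here beyond unwinding the relevant definitions, which is why the statements are attributed to \cite{CLS} and \cite{CO}.

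For the first item I would induct on $|V(G)|$. If $|V(G)|=2$ then the two vertices are twins, since a pair $\{u,v\}$ with nothing outside it is vacuously a homogeneous set. For the inductive step, write $G$ as the disjoint union or the join of two $P_4$-free graphs $G_1$ and $G_2$, each on fewer vertices. If one of them, say $G_1$, has at least two vertices, then by induction $G_1$ contains twins $u,v$; since every vertex of $G_2$ is either complete or anticomplete to all of $V(G_1)$, the pair $\{u,v\}$ is still a homogeneous set of $G$, so $u,v$ are twins in $G$. If instead both $G_1$ and $G_2$ are single vertices, then $G$ has exactly two vertices, already handled.

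For the second item I would exhibit a $2$-expression for $G$ by the same induction, using only the label set $\{1,2\}$ and the four clique-width operations (introduce a labelled vertex, disjoint union, relabel one label to another, add all edges between two label classes). A single vertex receives label $1$. If $G=G_1\oplus G_2$: take $2$-expressions for $G_1$ and $G_2$, append to each a relabelling sending every label to $1$, and take their disjoint union; the result is a $2$-expression for $G$ in which every vertex carries label $1$. If $G=G_1\otimes G_2$: arrange (by the relabelling above, and for $G_2$ a further swap $1\leftrightarrow 2$) that all vertices of $G_1$ end with label $1$ and all vertices of $G_2$ with label $2$, take the disjoint union, add all edges between the class of label $1$ and the class of label $2$, and finally relabel $2$ to $1$. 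No step uses more than two labels, so $G$ has clique-width at most $2$. The only point requiring care -- and the main obstacle, such as it is -- is maintaining the invariant that after processing each cograph all of its vertices carry a single prescribed label, which is exactly what makes the disjoint-union and join steps compose correctly and keeps the number of labels down to two; everything else is bookkeeping.
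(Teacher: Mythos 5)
Your proof is correct. The paper does not actually prove this statement -- it is quoted as a known fact with references to \cite{CLS} and \cite{CO} -- and your two inductions on the union/join (cograph) decomposition of $P_4$-free graphs are exactly the standard arguments behind those references; the only input you assume, that every $P_4$-free graph on at least two vertices is a disjoint union or a join of two smaller $P_4$-free graphs, is the classical characterization from \cite{CLS}, and granting it both steps (twins inherited from a factor with at least two vertices, and the $2$-expression maintained with the all-vertices-one-label invariant) are carried out correctly.
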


Brandst\"adt et al.~\cite{BKM} studied $(P_6,K_3)$-free graphs and
established the following result.
\begin{theorem}[\cite{BKM}]\label{thm:BKM}
The class of $(P_6,K_3)$-free graphs has bounded clique-width $c$, and
a $c$-expression can be found in time $O(|V(G)|^2)$ for every graph
$G$ in this class.
\end{theorem}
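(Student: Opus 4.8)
The plan is to bound the clique-width of \emph{prime} $(P_6,K_3)$-free graphs; Theorem~\ref{thm:cw1} then bounds it for every graph in the class, with the $c$-expression obtained by composing expressions for the prime quotients along the modular decomposition tree (computed in linear time by \cite{CHPT}). The structural engine is the dominating-subgraph characterization of $P_6$-free graphs from \cite{vHP}: every connected $P_6$-free graph, in particular every prime one on enough vertices, contains a dominating induced $C_6$ or a dominating complete bipartite subgraph. Triangle-freeness makes this dichotomy very usable: in a dominating biclique both parts must be stable (two adjacent vertices on one side together with any vertex on the other side would form a triangle), no vertex outside the biclique can have a neighbour on both sides, and if $H$ is the dominating $C_6$ then $N_H(v)$ is a stable set of $C_6$ for every $v\in V(G)\setminus V(H)$.

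First I would handle the $C_6$ case. Fixing $H=c_1c_2\cdots c_6c_1$, the fact that each $N_H(v)$ is an independent set of $C_6$ means $V(G)\setminus V(H)$ splits into boundedly many classes according to $N_H(v)$. I would then argue, by repeated use of $P_6$-freeness and triangle-freeness, that each such class induces a $P_4$-free graph (hence has clique-width at most $2$ by Theorem~\ref{thm:p4f}) and that the bipartite graph between any two classes, and between a class and $V(H)$, follows one of a bounded list of simple patterns (e.g.\ complete, empty, or a chain graph). Since clique-width increases by at most a constant factor when a graph is assembled from boundedly many blocks of bounded clique-width glued along such simple bipartite patterns (a standard composition argument), the $C_6$ case yields a clique-width bound.

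The biclique case is the main one, and the chief obstacle. Let $H$ have stable sides $A$ and $B$, complete to each other, with $A\cup B$ dominating, and partition $V(G)\setminus(A\cup B)$ into $C$ (vertices with neighbours only in $A$) and $D$ (only in $B$); then $C$ is anticomplete to $B$ and $D$ is anticomplete to $A$. The work is to control $G[A\cup C]$, $G[B\cup D]$ and the $C$--$D$ edges. Here $P_6$-freeness is exploited to bound the length of paths that leave $B$, cross through $A$, and run inside $C$ (for instance a path $b$--$a$--$u$--$w$--$\cdots$ with $b\in B$, $a\in A$ and the remaining vertices in $C$ cannot be extended into an induced $P_6$, which constrains how the adjacencies of later vertices of $C$ relate to earlier ones); iterating this, and refining $C$ by adjacency to a bounded set of ``representative'' vertices of the dominating structure, forces $C$ into boundedly many $P_4$-free blocks with simple interconnections, and likewise for $D$, after which the clique-width bound follows as in the previous paragraph.

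Finally, for the running time: a dominating $C_6$ or dominating biclique can be found in polynomial time by the constructive content of \cite{vHP} (or, for $C_6$, by brute force), the block decomposition above is read off directly, the $c$-expression is built from it, and modular decomposition via \cite{CHPT} stitches everything together for arbitrary inputs; with a careful implementation this stays within $O(|V(G)|^2)$. As indicated, the real difficulty is the biclique case: the content lies in showing that iterated applications of $P_6$- and triangle-freeness genuinely confine the dominated part $C\cup D$ to a decomposition of bounded ``block-complexity'' — equivalently, in working out the structure of $(P_6,K_3)$-free graphs that possess a dominating edge, which subsumes the structure of $P_6$-free bipartite graphs.
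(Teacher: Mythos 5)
First, note that the paper does not prove this statement at all: Theorem~\ref{thm:BKM} is imported verbatim from \cite{BKM}, so there is no internal argument for you to match; what you have written is a sketch of how one might reprove the cited result. As such a sketch it has a genuine gap, and you have in fact located it yourself: everything hinges on the claim that, after splitting $V(G)$ around the dominating structure from \cite{vHP}, the bipartite adjacencies between the resulting blocks ``follow one of a bounded list of simple patterns'' and that the biclique/dominating-edge case can be ``forced'' into boundedly many blocks with simple interconnections. This is asserted, not proved, and it is precisely the technical content of \cite{BKM}. Triangle-freeness alone gives you nothing here: any bipartite graph is triangle-free, and two stable sets (each of clique-width~$1$) joined by an arbitrary bipartite graph have unbounded clique-width, so the ``standard composition argument'' you invoke is only valid after one has shown that the cross-adjacencies really are of a restricted type (complete, empty, or otherwise of bounded module/chain complexity). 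Showing this from $P_6$-freeness is exactly the hard part — as you say, it subsumes bounding the clique-width of $P_6$-free bipartite graphs — and your proposal stops at the point where that analysis would have to begin. A smaller inaccuracy in the same spirit: in the $C_6$ case each neighbourhood class is in fact a stable set (two adjacent vertices with a common neighbour on the cycle would give a triangle), so the $P_4$-free claim is trivially true, but again the burden is entirely on the between-class structure, which you do not establish.

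Two further points would also need work even if the structural analysis were supplied. The reduction to prime graphs via Theorem~\ref{thm:cw1} and \cite{CHPT} is fine, but the $O(|V(G)|^2)$ bound on constructing a $c$-expression requires that the structural proof be made algorithmic (finding the dominating $C_6$ or biclique, computing the block decomposition, and emitting the expression) within that time; ``with a careful implementation this stays within $O(n^2)$'' is a claim, not an argument, and the constructive content of \cite{vHP} has to be checked to fit this budget. So the proposal is a reasonable plan whose skeleton (dominating subgraph dichotomy, modular decomposition, composition of bounded-clique-width blocks) is sound, but the central structural lemma that would make the composition step legitimate is missing, and without it the statement is not proved.
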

In \cite{BKM} it is proved that $c\le 40$ and claimed that one can
obtain $c\le 36$.  The following theorem refers to the same constant
$c$.

\begin{theorem}\label{thm:bcw}
Let $G$ be a $(P_6,\mbox{bull},\mbox{gem})$-free graph that contains a
$C_5$.  Then $G$ has bounded clique-width $c$, and a $c$-expression
can be found in time $O(|V(G)|^2)$ for every graph $G$ in this class.
\end{theorem}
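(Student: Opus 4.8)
The plan is to reduce the claim to Theorem~\ref{thm:BKM} by way of Theorems~\ref{thm:gemfree}, \ref{thm:cw1} and~\ref{thm:p4f}. First I would recall that by Theorem~\ref{thm:cw1} the clique-width of $G$ equals the maximum clique-width over its non-trivial prime induced subgraphs (and $P_4$-free graphs, which have no non-trivial prime subgraph, have clique-width at most $2$ by Theorem~\ref{thm:p4f}); so it suffices to bound the clique-width of every prime induced subgraph $H$ of $G$. Fix such an $H$. Since clique-width is monotone under taking induced subgraphs, the classes $(P_6,\mbox{bull},\mbox{gem})$ are all hereditary, so $H$ is again $(P_6,\mbox{bull},\mbox{gem})$-free; it is prime by assumption. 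The key dichotomy is whether $H$ contains a $C_5$. If $H$ does contain a $C_5$, then by Theorem~\ref{thm:gemfree} $H$ is triangle-free, hence $(P_6,K_3)$-free, so by Theorem~\ref{thm:BKM} its clique-width is at most $c$. If $H$ is $C_5$-free, then since $H$ is also $C_7$-free (any induced $C_7$ contains an induced $P_6$) and in fact $C_\ell$-free for all odd $\ell\ge 7$ for the same reason, and $\overline{C_\ell}$-free for all odd $\ell\ge 7$ because $\overline{C_7}$ contains an induced bull (or one checks $\overline{C_\ell}$ contains a gem for $\ell\ge 7$) and $\overline{C_5}=C_5$ is excluded, the Strong Perfect Graph Theorem makes $H$ perfect. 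A prime perfect $(P_6,\mbox{bull},\mbox{gem})$-free graph: here I would argue it has bounded clique-width as well — in fact one can check that perfect $(P_6,\mbox{gem})$-free graphs have bounded clique-width, or invoke that $C_5$-free in our setting forces enough structure. Taking the maximum of the two bounds (and $2$) we obtain a global bound, which we may take to be $c$.

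The slightly delicate point is the $C_5$-free prime case: I need a clique-width bound there that is independent of $H$. The cleanest route is to observe that a prime $(P_6,\mbox{bull},\mbox{gem})$-free graph with no $C_5$ is small or has very restricted structure; alternatively, since such a graph is perfect and $P_6$-free, one can cite known bounded-clique-width results for $P_6$-free perfect graphs, or simply note that these graphs are cographs-plus-bounded-modifications. In the worst case one absorbs this bound into the constant $c$ from Theorem~\ref{thm:BKM} by taking the maximum, which is legitimate because the theorem statement only asserts \emph{some} bounded constant $c$ (the same $c$ as in Theorem~\ref{thm:BKM}, up to enlarging it by a constant). I expect this case analysis to be the main obstacle, though it is a finite/structural check rather than a deep argument.

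For the algorithmic claim — producing a $c$-expression in time $O(|V(G)|^2)$ — I would proceed as follows. Compute the modular decomposition of $G$ in linear time (using \cite{CHPT}). This yields a decomposition tree whose internal nodes are of ``series'', ``parallel'', or ``prime'' type, the prime nodes corresponding to prime induced subgraphs of $G$ obtained by contracting maximal modules. For series and parallel nodes the clique-width combines trivially (join and disjoint union cost at most $2$ extra labels each). For each prime node, its associated prime graph $H$ is either $C_5$-free (a perfect $P_6$-free graph of the restricted type, whose bounded $c$-expression is found in polynomial, indeed $O(|V(H)|^2)$, time) or $C_5$-containing and thus $(P_6,K_3)$-free, in which case Theorem~\ref{thm:BKM} yields a $c$-expression in time $O(|V(H)|^2)$. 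Assembling these along the decomposition tree gives a $c$-expression for $G$; since the prime graphs have disjoint vertex sets the total time is $O(\sum_i |V(H_i)|^2) = O(|V(G)|^2)$, plus the linear-time modular decomposition. Finally I would note that the resulting $c$ can be taken to be the constant from Theorem~\ref{thm:BKM} (enlarged by the fixed constant absorbing the perfect case and the $+2$'s from series/parallel composition), matching the phrasing ``the same constant $c$'' in the remark preceding the statement.
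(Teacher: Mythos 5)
Your first case (a prime induced subgraph $H$ that contains a $C_5$) is handled exactly as in the paper, but your second case is a genuine gap, and you flag it yourself without closing it. For a prime $C_5$-free induced subgraph $H$ you only establish that it is perfect and then offer several unproven escape routes (``perfect $(P_6,\mbox{gem})$-free graphs have bounded clique-width'', ``small or very restricted structure'', ``cographs-plus-bounded-modifications''); none of these is a result available in the paper's toolkit, no reference or argument is supplied, and the fallback of ``absorbing the bound into $c$'' presupposes precisely the bound that is missing. As written, the proof cannot be completed along these lines (and, as a minor point, the statement refers to the same constant $c$ as Theorem~\ref{thm:BKM}, so enlarging it also deviates from what is claimed).

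The paper avoids this case entirely by one structural step your plan lacks. After reducing to $G$ and $\overline{G}$ connected (if $\overline{G}$ is disconnected, gem-freeness makes both sides of the join $P_4$-free, hence $G$ is $P_4$-free and has clique-width at most $2$ by Theorem~\ref{thm:p4f}), let $M_1,\dots,M_p$ be the maximal modules of $G$ and $G'$ the prime quotient obtained by one representative per module. Gem-freeness forces each $G[M_i]$ to be $P_4$-free (a $P_4$ inside $M_i$ together with a vertex of a module complete to $M_i$ is a gem), so by Theorem~\ref{thm:p4f} any induced subgraph using two vertices of some $M_i$ has twins; hence \emph{every} prime induced subgraph $H$ of $G$ meets each $M_i$ in at most one vertex and is isomorphic to an induced subgraph of $G'$. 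Since $C_5$ is itself prime, the $C_5$ of $G$ also survives in $G'$, so Theorem~\ref{thm:gemfree} applies to the single graph $G'$ and makes it triangle-free. Consequently all prime induced subgraphs of $G$ --- including your problematic $C_5$-free ones --- are $(P_6,K_3)$-free, Theorem~\ref{thm:BKM} bounds their clique-width by $c$ and provides the $c$-expression in time $O(|V(G)|^2)$, and Theorem~\ref{thm:cw1} finishes. To repair your argument, replace the per-$H$ dichotomy by this reduction to the quotient graph $G'$.
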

\begin{proof}
We may assume that $G$ is connected since the cliquewidth of a graph
is the maximum of the cliquewidth of its components.  Suppose that
$\overline{G}$ is not connected.  So $V(G)$ can be partitioned into
two non-empty sets $V_1$ and $V_2$ that are complete to each other.
Since $G$ is gem-free, each of $G[V_1]$ and $G[V_2]$ is $P_4$-free,
and consequently $G$ itself is $P_4$-free; so $G$ has cliquewith at
most $2$ by Theorem~\ref{thm:p4f}.  Therefore we may assume that $G$
and $\overline{G}$ are connected.  Let $M_1, \ldots, M_p$ be the
maximal modules of $G$.  Pick one vertex $m_i$ from each $M_i$, and
let $G'=G[\{m_1, \ldots, m_p\}]$.  Since $G$ and $\overline{G}$ are
connected we know from the theory of modular decomposition (see
Section~\ref{sec:mod}) that $M_1, \ldots, M_p$ form a partition of
$V(G)$, with $p\ge 4$, and that $G'$ is a prime graph.  Clearly $G'$
is $(P_6,\mbox{bull},\mbox{gem})$-free since it is an induced subgraph
of $G$.  We observe that:
\begin{equation}\label{mip4f}
\mbox{$G[M_i]$ is $P_4$-free, for each $i\in\{1,\ldots,p\}$.}
\end{equation}
Proof: Since $p\ge 2$ and $G$ is connected there is a module $M_j$
such that $j\neq i$ and $M_j$ is complete to $M_i$.  If $G[M_i]$
contains a $P_4$, then $m_j$ and the four vertices of this $P_4$
induce a gem, a contradiction.  So (\ref{mip4f}) holds.

\medskip

Consider any prime induced subgraph $H$ of $G$.  We claim that:
\begin{equation}\label{hmi}
\mbox{$H$ contains at most one vertex from each maximal module $M_i$.}
\end{equation}
Proof: Suppose that $H$ contains two vertices from some $M_i$.  By
(\ref{mip4f}) the subgraph of $G$ induced by $V(H)\cap M_i$ has a pair
of twins; but this contradicts the fact that $H$ is prime.  So
(\ref{hmi}) holds.

\medskip

By (\ref{hmi}), $H$ is isomorphic to an induced subgraph of $G'$.  By
Theorems~\ref{thm:gemfree} and~\ref{thm:BKM}, $H$ has bounded
clique-width.  Hence and by Theorem~\ref{thm:cw1}, $G$ has bounded
clique-width.
\end{proof}

\noindent{\it Proof of
Theorem~\ref{thm:kp6bgf}.}\setcounter{equation}{0} %
Let $G$ be a $(P_6,\mbox{bull},\mbox{gem})$-free graph.  Since $G$ is
$P_6$-free it contains no $C_\ell$ with $\ell\ge 7$, and since it is
gem-free it contains no $\overline{C_\ell}$ with $\ell\ge 7$.  So if
$G$ also contains no $C_5$, then it is a bull-free perfect graph.
In that case we can use the algorithms from either \cite{FM} or
\cite{Penev} to find a $\chi(G)$-coloring of $G$ in polynomial time,
and we need only check whether $\chi(G)\le k$.  (When $k=4$, we can do
a little better: by Lemmas~\ref{lem:magnet} and~\ref{lem:Fi} we may
assume that $G$ is also $F_5$-free, so $G$ contains no
$\overline{C_\ell}$ for any $\ell\ge 6$.  Then we can use the
algorithm from \cite{FMP}, which is simpler than those in
\cite{FM,Penev}.)

Now assume that $G$ contains a $C_5$.  Then Theorems~\ref{thm:bcw}
and~\ref{thm:cw0} imply that the $k$-coloring problem can be solved in
polynomial time.  {\hfill$\Box$\vspace{2ex}}

\section{When there is a gem}\label{sec:yesgem}

By the results in the preceding sections, we may now focus on the case
when the graph contains a gem.  Suppose that $v_1, \ldots, v_5$ are
five vertices that induce a gem with edges $v_1v_2$, $v_2v_3$,
$v_3v_4$ and $v_5v_i$ for each $i\in\{1,2,3,4\}$.  We can define the
following sets.  Let $S=\{v_1, \ldots, v_5\}$ and let:
\begin{itemize}
\item
$V_i=\{x\in V(G)\mid$ $N_S(x)\setminus \{v_i\} =N_S(v_i)\}$ for each
$i\in\{1,\ldots,5\}$.
\item
$X=\{x\in V(G) \mid$ $x$ is complete to $\{v_1,v_4\}$ and anticomplete
to $\{v_2,v_3\}\}$.
\item
$W=\{x\in V(G) \mid$ $x$ is anticomplete to $\{v_1, v_2, v_3, v_4\}$
and has a neighbor in $V_5\}$.
\item
$Z=\{x\in V(G)\mid$ $x$ is anticomplete to $\{v_1,v_2, v_3,v_4\}\cup
V_5\}$.
\item
$Z_1 = \{x \in V(G) \mid$ $x$ is in any component of $Z$
that has a neighbor in $W\}$.
\item 
$Z_0 = Z \setminus Z_1$.
\end{itemize}
We note that constructing these sets can be done in time $O(n^2)$ by
scanning adjacency lists.
 
\begin{theorem}\label{thm:yesgem}
Let $G$ be a ($P_6$, bull)-free graph.  Assume that $G$ is
quasi-prime, contains no $K_5$, no double wheel and no $F_0$, $F_1$,
\ldots, $F_6$, and that $G$ contains a gem induced by $\{v_1, \ldots,
v_5\}$.  Let $S$, $V_i$ $(i=1,\ldots,5)$, $X$, $W$, $Z$, $Z_0$ and $Z_1$
be the sets defined as above.  Then the following holds:
\begin{enumerate}[label=(\alph*)]
\item\label{lxne}
$X$ is not empty.
\item\label{lx}
$X$ is anticomplete to $V_2\cup V_3\cup V_5$ and complete to
$V_1\cup V_4$.
\item\label{lvg}
$V(G) = \bigcup_{i=1}^5 V_i\cup W\cup X\cup Z$.
\item\label{lv5}
$V_5$ is complete to $V_1\cup\cdots\cup V_4$.
\item\label{lw}
$W$ is complete to $X$ and anticomplete to $V_1\cup\cdots\cup V_4$.
\item\label{lz}
$Z$ is anticomplete to $V_1\cup\cdots\cup V_4$.
\item\label{lz1}
$Z_1$ is complete to $X$.
\item\label{lcx}
Every component of $X$ is homogeneous and is a clique.
\item\label{lz0}
Every component of $Z_0$ is homogeneous and is a clique.
\item\label{lxh}
$X$ is a homogeneous set in $G\setminus Z_0$.
\item\label{lwz1}
If $Z_1\neq\emptyset$, then there is a vertex $w^*$ in $W$
such that $Z_1\cap N(w^*)$ is complete to $Z_1\setminus N(w^*)$.
\end{enumerate}
\end{theorem}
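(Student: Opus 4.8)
The plan is to prove the eleven items in the order stated, since the later ones build on the earlier adjacency statements. Throughout we rely on $G$ being bull-free, $P_6$-free, $K_5$-free, double-wheel-free and $F_0,\dots,F_6$-free, and on $G$ being quasi-prime; note that $G$ is \emph{not} gem-free in this section, so these are exactly the forbidden induced subgraphs at our disposal. Item (a) follows from the second item of Lemma~\ref{lem:Fi} applied to the gem $G[S]$: we may assume $G[S]$ is not a magnet (otherwise Lemma~\ref{lem:magnet} already decides $4$-colorability), and then Lemma~\ref{lem:Fi} yields a vertex complete to $\{v_1,v_4\}$ and anticomplete to $\{v_2,v_3,v_5\}$, that is, a vertex of $X$.

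For the adjacency statements (b)--(g) the method is uniform: assume the conclusion fails, so some vertex has a ``forbidden'' neighborhood (for the partition statement (c), a vertex $y$ whose trace $N_S(y)$ is of a type not covered by the classes), and exhibit one of the forbidden subgraphs. For (b), (c), (d) we argue inside $S$ together with one or two extra vertices — using a vertex of $X$ (available by (a)) where convenient — and split according to adjacency to $v_5$; for the statements involving $Z$, namely (e), (f), (g), we additionally use that every vertex of $W$ has a neighbor in $V_5$, and when the offending vertex lies deep inside a component of $G[Z]$ we propagate the configuration along a shortest path of $G[Z]$ to a $W$-neighbor, producing an induced $P_6$ in the bad case. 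I expect this block — and in particular item (d), that $V_5$ is complete to $V_1\cup\cdots\cup V_4$ — to be the main obstacle: precisely because $G$ is not gem-free, several five-vertex graphs one naturally meets ($4$-wheels, houses, kites, banners) are \emph{not} forbidden, so one is forced either to extend to a genuine $K_5$, double wheel or $F_i$, or to produce a $P_6$, and keeping track of the sub-cases is delicate.

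The four structural items (h)--(k) are cleaner. For (h): all of $V_1,\dots,V_5$, $W$ and $Z_1$ are complete or anticomplete to $X$ by (b), (e), (g), so a component $C$ of $G[X]$ can fail to be homogeneous only because some $z\in Z_0$ has both a neighbor $x$ and a non-neighbor $x'$ in $C$; choosing $x,x'$ adjacent, the set $\{z,x,x',v_1,v_2\}$ induces a bull (edges $zx,xx',xv_1,x'v_1,v_1v_2$, using (b) and (f)). Hence each component of $G[X]$ is homogeneous, and being quasi-prime it is a clique. For (i) the same works with $X$ and $Z_0$ interchanged: the only vertices outside a component $C$ of $G[Z_0]$ that can touch $C$ lie in $X$, so if some $x\in X$ has a neighbor $z$ and a non-neighbor $z'$ in $C$ with $z,z'$ adjacent, then $z'$-$z$-$x$-$v_1$-$v_2$-$v_3$ is an induced $P_6$ (using (b) and (f)); so each component of $G[Z_0]$ is a homogeneous clique. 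Item (j) is then immediate from (b), (e), (g): every vertex of $(G\setminus Z_0)\setminus X$ lies in $V_1\cup V_4\cup W\cup Z_1$ (complete to $X$) or in $V_2\cup V_3\cup V_5$ (anticomplete to $X$).

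Finally (k). Assume $Z_1\neq\emptyset$, so $W\neq\emptyset$. The key claim is that every $w\in W$ is, for each connected component $D$ of $\overline{G}[Z_1]$, complete or anticomplete to $D$. Otherwise $w$ has a neighbor and a non-neighbor in some such $D$, hence (as $D$ is connected in $\overline{G}$) there are $a',b'\in D$ with $a'b'\notin E(G)$, $wa'\in E(G)$ and $wb'\notin E(G)$; taking any $x\in X$ (by (a)) and a neighbor $v\in V_5$ of $w$, one checks from (b), (e), (g) and the definition of $Z$ that $\{v,x,w,a',b'\}$ induces a bull (edges $vw,xw,xa',xb',wa'$), a contradiction. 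Granting the claim, fix any $w^*\in W$; then $N_{Z_1}(w^*)$ is a union of components of $\overline{G}[Z_1]$, so no edge of $\overline{G}$ joins $N_{Z_1}(w^*)$ to $Z_1\setminus N_{Z_1}(w^*)$, which is exactly saying that $N_{Z_1}(w^*)$ is complete to $Z_1\setminus N_{Z_1}(w^*)$ in $G$.
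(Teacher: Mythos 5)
Your items (a) and (h)--(k) are fine and essentially match the paper. For (a) you invoke Lemma~\ref{lem:magnet} and the second item of Lemma~\ref{lem:Fi} exactly as the paper does; for (h) and (i) your bull $\{z,x,x',v_1,v_2\}$ and your $P_6$ $z'$-$z$-$x$-$v_1$-$v_2$-$v_3$ are the paper's own configurations; (j) is the same one-line consequence of (b), (e), (g). For (k) you take a slightly different route: you show every $w\in W$ is complete or anticomplete to each component of $\overline{G}[Z_1]$, via the bull $\{v,x,w,a',b'\}$, so that \emph{every} $w^*\in W$ works, whereas the paper fixes one $w^*$ with a neighbor in $Z_1$ and exhibits the bull $\{v,w^*,y,x_0,z\}$ directly; both arguments use the same five-vertex configuration (a $V_5$-neighbor of $w$, a vertex of $X$, and two non-adjacent $Z_1$-vertices), and yours yields a marginally stronger conclusion at no extra cost.

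The genuine gap is that items (b)--(g) are not proved. For this block you only describe a strategy (``assume the conclusion fails and exhibit one of the forbidden subgraphs, splitting on adjacency to $v_5$, propagating along shortest paths in $Z$'') and you explicitly defer the sub-case analysis, calling it the main obstacle. But that case analysis \emph{is} the content of the theorem: for (b) one must actually produce an $F_2$ (when an $X$-vertex meets $V_5$), an $F_4$ (when it meets $V_2\cup V_3$), and a bull (when it misses $V_1\cup V_4$); for the partition statement (c) one must run through all traces $N_S(u)$, and the bad traces are killed by $F_1$, $F_3$, $F_4$, $F_5$, $F_6$ and bulls, several of which require adjoining the vertex $x_0\in X$ from (a) -- this is precisely why (a) is proved first, a dependence your sketch does not make explicit; (d) and (e) again need $x_0$ together with $F_3$, $F_1$ and bulls; and (g) needs the induction along a $W$--$Z_1$ path producing $z_i$-$z_{i-1}$-$x$-$v_1$-$v_2$-$v_3$. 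None of these verifications appear in your proposal, and since (h)--(k) are derived from (b)--(g), the parts you do prove rest on unproven statements. As written, the proposal is an outline of the right kind of argument for (b)--(g), not a proof of them.
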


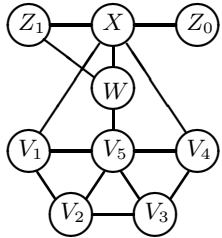
\begin{figure}[ht]
\unitlength=0.07cm
\thicklines
\begin{center}
\begin{picture}(40,44) %
\multiput(12,4)(16,0){2}{\ball}
\multiput(4,16)(16,0){3}{\ball}
\put(20,28){\ball}
\multiput(4,40)(16,0){3}{\ball}
\put(16,4){\line(1,0){8}}
\multiput(8,16)(16,0){2}{\line(1,0){8}}
\multiput(8,40)(16,0){2}{\line(1,0){8}}
\multiput(20,20)(0,12){2}{\line(0,1){4}}
\multiput(14.9,6.9)(16,0){2}{\line(2,3){3.5}}
\multiput(9.1,6.9)(16,0){2}{\line(-2,3){3.5}}
\put(6.2,19.6){\line(2,3){11.2}}
\put(33.8,19.6){\line(-2,3){11.2}}
\put(6.9,37.2){\line(4,-3){9.5}}
\put(10,3){{\small $V_2$}}\put(26,3){{\small $V_3$}}
\put(2,15){{\small $V_1$}}\put(18,15){{\small 
$V_5$}}\put(34,15){{\small $V_4$}}
\put(18,26){{\small $W$}}
\put(2,39){{\small $Z_1$}}\put(18,39){{\small $X$}}\put(34,39){{\small $Z_0$}}
\end{picture}
\end{center}
\caption{The partition of $V(G)$ in Theorem~\ref{thm:yesgem}.
 A line between two sets represents partial or complete
adjacency.  No line means that the two sets are anticomplete to each
other.}
\label{fig:vg}
\end{figure}

\begin{proof}
Note that $v_i\in V_i$ for each $i\in\{1,\ldots,5\}$.  It is easy to
check from their definition that the sets $V_1, \ldots, V_5, X, W, Z$
are pairwise disjoint.  

\medskip

\ref{lxne} By Lemma~\ref{lem:magnet} we may assume that $G[S]$ itself
is not a magnet (and this can easily be checked in polynomial time).
Then the second item of Lemma~\ref{lem:Fi} implies the existence of a
vertex that is complete to $\{v_1,v_4\}$ and anticomplete to
$\{v_2,v_3,v_5\}$, so that vertex is in $X$.  Thus item~\ref{lxne}
holds.

\medskip

\ref{lx} Consider any $x\in X$.  Suppose that $x$ has a neighbor $v$
in $V_2\cup V_3\cup V_5$.  If $v\in V_5$, then $\{v_1,v_2,v_3,$ $v_4,$
$v,x\}$ induces an $F_2$, a contradiction.  So $x$ is anticomplete to
$V_5$; in particular $x$ is not adjacent to $v_5$.  If $v$ in $V_2$,
then $\{v_1,v, v_3,v_4,v_5,x\}$ induces an $F_4$.  The same holds if
$v\in V_3$.  If $x$ has a non-neighbor $u$ in $V_1$, then
$\{x,v_4,v_3,v_5,u\}$ induces a bull.  The same holds if $u\in V_4$.
Thus \ref{lx} holds.

\medskip

By~\ref{lxne} we pick a vertex $x_0\in X$.  By \ref{lx} $x_0$ is
not adjacent to $v_5$.

\medskip

\ref{lvg} Let $u$ be any vertex in $V(G)$.  First suppose that $u$ is
adjacent to both $v_1,v_4$.  Then $u$ has exactly one neighbor in
$\{v_2,v_3\}$, for otherwise $u$ is in $V_5$ or $X$.  So assume that
$u$ is adjacent to $v_2$ and not to $v_3$.  Then $u$ is adjacent to
$v_5$, for otherwise $\{u,v_1,\ldots,v_5\}$ induces an $F_4$, and to
$x_0$, for otherwise $\{x_0,v_1,u,v_5,v_3\}$ induces a bull; but then
$\{u,v_1,\ldots,v_5,x_0\}$ induces an $F_6$.  \\
Now suppose that $u$ is non-adjacent to both $v_1,v_4$.  Then $u$ has
exactly one neighbor in $\{v_2,v_3\}$, for otherwise either $u$ is in
$W\cup Z$ or $\{v_1,v_2,u,v_3,v_4\}$ induces a bull.  So assume that
$u$ is adjacent to $v_2$ and not to $v_3$; then $u$ is adjacent to
$v_5$, for otherwise $\{u,v_2,v_1,v_5,v_4\}$ induces a bull; and so
$u\in V_1$.  \\
Finally suppose, up to symmetry, that $u$ is adjacent to $v_1$ and not
to $v_4$.  If $u$ is not adjacent to $v_2$, then it is adjacent to
$v_5$, for otherwise $\{u,v_1,v_2,v_5,v_4\}$ induces a bull, and to
$v_3$, for otherwise $\{u,v_1,\ldots,v_5\}$ induces an $F_1$; and so
$u$ is in $V_2$.  So suppose that $u$ is adjacent to $v_2$.  if $u$ is
not adjacent to $v_3$, then it is adjacent to $v_5$, for otherwise
$\{u,v_1,\ldots,v_5\}$ induces an $F_3$; and so $u$ is in $V_1$.  So
suppose that $u$ is adjacent to $v_3$.  If $u$ is not adjacent to
$v_5$, then it is adjacent to $x_0$, for otherwise $\{u,v_3,v_5,
v_4,x_0\}$ induces a bull; but then $\{u,v_1,v_3,v_4,v_5,x_0\}$
induces an $F_5$.  So $u$ is adjacent to $v_5$, and so $u$ is in
$V_2$.  Thus \ref{lvg} holds.

\medskip

\ref{lv5} Consider any $v\in V_5$.  Suppose that $v$ has a
non-neighbor $u\in V_1\cup V_2$.  Note that $v\neq v_5$ and
$u\notin\{v_1,v_2\}$.  By \ref{lx}, $v$ is not adjacent to $x_0$.  If
$u\in V_1$, then $u$ is adjacent to $v_1$, for otherwise
$\{u,v_2,v_1,v,v_4\}$ induces a bull; but then
$\{u,v,v_1,v_2,v_3,v_4\}$ induces an $F_3$.  If $u\in V_2$, then,
by~\ref{lx}, $u$ is not adjacent to $x_0$; but then
$\{u,v_3,v,v_4,x_0\}$ induces a bull.  Thus \ref{lv5} holds.

\medskip

\ref{lw} Consider any $w\in W$.  By the definition of $W$, $w$ has a
neighbor $v$ in $V_5$.  Consider any $x\in X$.  By \ref{lx}, $v$ is
not adjacent to $x$.  Then $w$ is adjacent to $x$, for otherwise
$\{w,v,v_3,v_4,x\}$ induces a bull.  So $w$ is complete to $X$.  Now
suppose up to symmetry, that $w$ has a neighbor $u\in V_1\cup V_2$.
We know that $w$ is adjacent to $x_0$ as proved just above.  By
\ref{lx}, $x_0$ is not adjacent to $v$.  By \ref{lv5} $v$ is adjacent
to $u$.  If $u\in V_1$, then $\{u,v,w,v_2,v_3,v_4\}$ induces an $F_1$.
If $u\in V_2$, then $u$ is adjacent to $v_2$, for otherwise
$\{v_2,v,u,w,x_0\}$ induces a bull; but then $\{w,u,v_2,v_3,v_4\}$
induces a bull.  Thus \ref{lw} holds.

\medskip

\ref{lz} Suppose, up to symmetry, that some vertex $z\in Z$ has a
neighbor $u\in V_1\cup V_2$.  If $u\in V_1$, then
$\{z,u,v_2,v_5,v_4\}$ induces a bull.  If $u\in V_2$, then
$\{z,u,v_1,v_5,v_4\}$ induces a bull.  So \ref{lz} holds.

\medskip

\ref{lz1} Consider any $z\in Z_1$ and $x\in X$.  By the definition of
$Z_1$, there is a path $z_0$-$\cdots$-$z_\ell$ such that $z_0\in W$,
$z_1, \ldots, z_\ell\in Z_1$ and $z=z_\ell$.  We take a shortest such
path, so if $\ell\ge 2$ then $z_2,\ldots,z_\ell$ are not adjacent to
$z_0$.  By~\ref{lw} $x$ is adjacent to $z_0$.  Then by induction on
$i=1,\ldots,\ell$, and by \ref{lx} and \ref{lz}, we see that $x$ is
adjacent to $z_i$, for otherwise $z_i$-$z_{i-1}$-$x$-$v_1$-$v_2$-$v_3$
is an induced $P_6$.  Thus \ref{lz1} holds.

\medskip

\ref{lcx} Suppose that some component $Y$ of $X$ is not homogeneous;
so there are adjacent vertices $x,y\in Y$ and a vertex $z\in
V(G)\setminus Y$ such that $z$ is adjacent to $y$ and not to $x$.  By
\ref{lx}, \ref{lw} and \ref{lz1} we have $z\in Z_0$.  Then, by
\ref{lx}, $\{z,y,x,v_1,v_2\}$ induces a bull.  So $Y$ is homogeneous,
and consequently $Y$ is a clique since $G$ is quasi-prime.  Thus
\ref{lcx} holds.

\medskip

\ref{lz0} Suppose that some component $Y$ of $Z_0$ is not homogeneous;
so there are adjacent vertices $y,z\in Y$ and a vertex $x\in
V(G)\setminus Y$ such that $x$ is adjacent to $y$ and not to $z$.  By
\ref{lz} and the definition of $Z_0$ and $Y$, we have $x\in X$.  Then,
by \ref{lx}, $z$-$y$-$x$-$v_1$-$v_2$-$v_3$ is an induced $P_6$.  So
$Y$ is homogeneous, and consequently $Y$ is a clique since $G$ is
quasi-prime.  Thus \ref{lz0} holds.

\medskip

\ref{lxh} By \ref{lx}, \ref{lw} and \ref{lz1}, $X$ is complete to
$V_1\cup V_4\cup W\cup Z_1$ and anticomplete to $V_2\cup V_3\cup V_5$.
So \ref{lxh} holds.

\medskip

\ref{lwz1} By the definition of $Z_1$, some vertex $w^*$ in $W$ has a
neighbor in $Z_1$.  Suppose that $Z_1\cap N(w^*)$ is not complete to
$Z_1\setminus N(w^*)$.  So there are non-adjacent vertices $y\in
Z_1\cap N(w^*)$ and $z \in Z_1\setminus N(w^*)$.  By \ref{lw} and
\ref{lz1}, $x_0$ is complete to $\{w^*,y,z\}$.  By the definition of
$W$, $w^*$ has a neighbor $v$ in $V_5$.  By \ref{lx} and the
definition of $Z$, $v$ is anticomplete to $\{x_0,y,z\}$.  Then
$\{v,w^*,y,x_0,z\}$ induces a bull.  So \ref{lwz1} holds.  This
completes the proof of the lemma.
\end{proof}

\begin{theorem}\label{thm:yesgemcol}
Let $G$ be a ($P_6$, bull)-free graph.  Assume that $G$ is
quasi-prime, contains no $K_5$, no double wheel and no $F_0$, $F_1$,
\ldots, $F_6$, and that $G$ contains a gem.  Then we can determine in
polynomial time whether $G$ is $4$-colorable.
\end{theorem}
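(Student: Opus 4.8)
The plan is to combine the partition of Theorem~\ref{thm:yesgem} with the two tools already developed: the magnet reduction of Lemma~\ref{lem:magnet} and, when a magnet is not directly available, enumerating the $4$-colourings of a bounded set of vertices and extending each of them by solving a $2$-list-colouring instance. The first observation is that $\{v_1,\dots,v_5\}$ already controls $V_1\cup\cdots\cup V_5$: straight from the definition of the sets $V_i$, every vertex of $V_i$ is adjacent to two vertices of $\{v_1,\dots,v_5\}$ that are themselves adjacent (for example a vertex of $V_1$ is adjacent to both $v_2$ and $v_5$). Hence the only obstruction to $\{v_1,\dots,v_5\}$ being a magnet of $G$ lies inside $X\cup W\cup Z$, and the rest of the proof is about taming these three sets.

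I would first treat $X$ and $Z_0$. By part~\ref{lcx} each component of $X$ is a homogeneous clique, hence a clique module of $G$; since $G$ is quasi-prime and $K_5$-free and $X$ is complete to the nonempty set $V_1$ (part~\ref{lx}), such a component has at most three vertices. By parts~\ref{lz}, \ref{lz0} and the definition of $Z_0$ (no neighbour in $W\cup Z_1$, anticomplete to $V_1\cup\cdots\cup V_5$), each component of $Z_0$ is a clique whose whole neighbourhood is a union of components of $X$, to which it is complete; in particular all cliques occurring in $G[X\cup Z_0]$ have size at most $4$. In any $4$-colouring of $G$ the colours of $X\cup Z_0$ are constrained only by the colours of $N(X)=V_1\cup V_4\cup W\cup Z_1$, and since $X$ is homogeneous in $G\setminus Z_0$ (part~\ref{lxh}) this is a list-colouring problem that is local to the (bounded) cliques of $G[X\cup Z_0]$; I would dispose of it by a short direct argument and thereby reduce to the case $Z_0=\emptyset$. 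Now $X$ is a homogeneous set of $G$ and $G$ is quasi-prime, so $X$ is a clique, with $|X|\le 3$. If $|X|\ge 2$, then, since $W$ and $Z_1$ are complete to $X$ (parts~\ref{lw}, \ref{lz1}), every vertex of $W\cup Z_1$ is adjacent to the two ends of an edge inside $X$, so $\{v_1,\dots,v_5\}\cup X$ is a magnet of bounded size and Lemma~\ref{lem:magnet} decides $4$-colourability.

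The remaining case, $X=\{x_0\}$, is where I expect the main obstacle. Here $x_0$ is complete to all of $W\cup Z_1$ (parts~\ref{lw}, \ref{lz1}), $W$ is anticomplete to $V_1\cup\cdots\cup V_4$ and has a neighbour in $V_5$, and part~\ref{lwz1} provides $w^*\in W$ whose $Z_1$-neighbourhood is complete to the rest of $Z_1$. My plan is to show that $G[W\cup Z_1]$ has a dominating set $D$ of bounded size: part~\ref{lwz1} reduces the $Z_1$ side to the $W$ side, and for $W$ itself I would use $(P_6,\mbox{bull})$-freeness together with the exclusion of $F_0,\dots,F_6$ and the $P_6$-free domination arguments in the style of \cite{vHP}. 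The hard part is precisely this: the partial adjacency between $W$ and $V_5$ and the a priori unknown internal structure of $W$ are exactly what prevent $\{v_1,\dots,v_5,x_0\}$ from already being a magnet, so one has to extract a bounded core of $W$ from the excluded-subgraph hypotheses, and a further small case split is likely. Granting such a $D$, the set $\{v_1,\dots,v_5,x_0\}\cup D$ is a magnet: a vertex $u\in W\cup Z_1$ outside it has a neighbour $u'\in D$, and $u'x_0$ is an edge incident to two neighbours of $u$. Lemma~\ref{lem:magnet} then finishes this case as well, and combining it with the (polynomial) preprocessing of the earlier sections and the enumeration used for the $X\cup Z_0$ gadget yields a polynomial-time $4$-colourability test, proving the theorem. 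The two steps that carry the real content are the handling of the $X\cup Z_0$ gadget and the bounded-domination claim for $W$; everything else is bookkeeping on the partition of Theorem~\ref{thm:yesgem}.
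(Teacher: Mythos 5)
Your handling of $Z_0$ and of the case $|X|\ge 2$ matches the paper: the reduction to $Z_0=\emptyset$, the observation that $X$ is then a clique of size at most $3$, and the fact that $\{v_1,\dots,v_5\}\cup X$ is a magnet when $|X|\ge 2$ are exactly the paper's claim (1) and its first precoloring case. The gap is in the case $X=\{x_0\}$, where your plan rests on the claim that $G[W\cup Z_1]$ has a dominating set of bounded size, so that $\{v_1,\dots,v_5,x_0\}\cup D$ becomes a magnet. That claim is false in general: nothing in the hypotheses prevents $W$ from being a large stable set in which each vertex is adjacent only to $x_0$ and to a single vertex of a stable $V_5$ (with $Z_1=\emptyset$), and then $G[W\cup Z_1]$ is edgeless, has no bounded dominating set, and --- worse --- such a vertex $w$ has a stable neighbourhood in all of $G$ (since $x_0$ is anticomplete to $V_5$ by item~\ref{lx}), so \emph{no} magnet of $G$, bounded or not, can ever give $w$ a list of size $2$. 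So the magnet/$2$-list-colouring machinery of Lemma~\ref{lem:magnet} cannot, by itself, close the $|X|=1$ case; after any bounded precoloring some vertices of $W\cup Z_1$ genuinely retain lists of size $3$.

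This is precisely where the paper's proof does its real work, and where your proposal is missing the key idea: instead of trying to shrink the lists to size $2$, the paper accepts lists of size $3$ on $W\cup Z_1$ and reduces the residual problem to $3$-colouring an auxiliary $P_6$-free graph, solved by the known algorithms of Randerath--Schiermeyer and Broersma et al.\ that were already used in Lemma~\ref{lem:abc}. Concretely, it splits on $|f(\{v_1,\dots,v_4\})|$: when this is $3$ (or when some $V_i$, $i\le 4$, contains an edge), $V_5$ is forced monochromatic and the problem on $W\cup Z$ is a plain $3$-colouring of $G[W\cup Z]$, independent (by items~\ref{lw} and~\ref{lz}) of the $2$-list problem on $V_1\cup\dots\cup V_4$; when it is $2$ and all $V_i$ are stable, $V_5$ must be bipartite, and the paper proves two structural claims about how $W$ attaches to the big components $D_i=A_i\cup B_i$ of $V_5$ (each $A_i,B_i$ contains a vertex complete to the mixed attachers $W_i$, and one of the one-sided attacher sets $W_{A_i},W_{B_i}$ is empty), which allow it to build a bounded transversal $T$ and show that $f$ extends if and only if the $P_6$-free graph $H=G[Z_1\cup W\cup T\cup\{v_1,v_2\}]$ is $3$-colourable, via a colour-swapping argument on the components $D_i$. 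None of this is ``bookkeeping''; it is the content that replaces your unprovable bounded-domination step, and without something of this kind your argument does not go through.
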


\begin{proof}
Let $v_1,\ldots,v_5$ be five vertices that induce a gem, with edges
$v_1v_2$, $v_2v_3$, $v_3v_4$ and $v_5v_i$ for each $i\in\{1,2,3,4\}$,
and let $V_i$ $(i=1,\ldots,5)$, $X$, $W$, $Z$, $Z_0$ and $Z_1$ be the
sets defined as in Theorem~\ref{thm:yesgem}.  In this proof all items
\ref{lxne}--\ref{lwz1} that we invoke refer to
Theorem~\ref{thm:yesgem}.  First, we observe that:
\begin{equation}\label{GmZ0}
\longbox{$G$ is $4$-colorable if and only if $G\setminus Z_0$ is
$4$-colorable.  Moreover, given any $4$-coloring of $G\setminus Z_0$
we can make a $4$-coloring of $G$ in polynomial time.}
\end{equation}
Proof: Clearly if $G$ is $4$-colorable then $G\setminus Z_0$ is
$4$-colorable.  So let us prove the converse and the second sentence
of the claim.  Let $c$ be a $4$-coloring of $G\setminus Z_0$.  Let $t$
be the maximum size of a component of $X$.  By item~\ref{lcx} we may
assume, up to relabeling, that the colors used on such a component are
$1,\ldots, t$.  By items~\ref{lx}, \ref{lw} and \ref{lz1}, these
colors are not used on $V_1\cup V_4\cup W\cup Z_1$.  So for every
component $Y$ of $X$ we can recolor the vertices of $Y$ with colors
$1,\ldots,|Y|$.  Thus we obtain a $4$-coloring $c'$ of $G\setminus
Z_0$.  Now we can extend $c'$ to $Z_0$ as follows.  Let $U$ be any
component of $Z_0$.  By the definition of $Z$ and $Z_0$ and by
item~\ref{lz}, we have $N(U)\subseteq X$.  Let $Y$ be the largest
component of $X$ that is adjacent to $U$.  By items~\ref{lcx} and
\ref{lz0}, $U$ and $Y$ are complete to each other and $U\cup Y$ is a
clique.  Since $G$ contains no $K_5$, we have $|U|\le 4-|Y|$.
Moreover, by the choice of $Y$, any component $Y'$ of $X$ that is
adjacent to $U$ is not larger than $Y$, so the colors used on $Y'$ are
also used on $Y$.  So $U$ can be colored with the colors from
$\{1,2,3,4\}$ that are not used on $Y$.  We can proceed similarly for
all $U$.  This yields a $4$-coloring of $G$.  Thus (\ref{GmZ0}) holds.

\medskip

By (\ref{GmZ0}) we may assume that $Z_0=\emptyset$.  By item~\ref{lxh}
we may assume that $X$ is a clique, with $|X|\le 3$.

\medskip

Now we can describe the coloring procedure.  We ``precolor'' a set $P$
of vertices (of size at most $8$), that is, we try every $4$-coloring
$f$ of $P$ and check whether the precoloring $f$ extends to a
$4$-coloring of $G$.  Each vertex $v$ in $V(G)\setminus P$ has a list
$L(v)$ of available colors, which consists of the set $\{1,2,3,4\}$
minus the colors given by $f$ to the neighbors of $v$ in $P$.  Hence
we want to solve the $L$-coloring problem on $G\setminus P$ or
determine that it has no solution.

\medskip

First suppose that $|X|\ge 2$.  Let $P=\{v_1, v_2, v_3, v_4, v_5\}\cup
X$.  So $|P|\le 8$.  It follows from items~\ref{lv5}, \ref{lw}, and
\ref{lz1} that every vertex in $V(G)\setminus P$ has two adjacent
neighbors in $P$.  So every vertex $v$ in $V(G)\setminus P$ satisfies
$|L(v)|\le 2$.  Hence checking whether $f$ extends to $G$ is a
$2$-list-coloring problem on the vertices of $G\setminus P$, which can
be solved in polynomial time.  Therefore we may assume that $|X|=1$,
and so $X=\{x_0\}$.

\medskip

Let $P=\{v_1,v_2,v_3,v_4,x_0\}$.  Clearly $|f(\{v_1,v_2,
v_3,v_4\})|\ge 2$; moreover we may assume that $|f(\{v_1,v_2,
v_3,v_4\})| \le 3$ for otherwise the precoloring cannot be extended to
$V_5$ and we stop examining it.  We distinguish two cases.

\medskip

{\it Case 1: $|f(\{v_1,v_2, v_3,v_4\})|= 3$.} \\
We may assume up to relabeling that $f(\{v_1,v_2, v_3,v_4\})=
\{1,2,3\}$.  Then $L(v)=\{4\}$ for all $v\in V_5$, so $V_5$ must be a
stable set, for otherwise the precoloring cannot be extended to $V_5$
and we stop examining it.  So let us assume that $V_5$ is a stable
set, and let $f(v)=4$ for all $v\in V_5$.  \\
Suppose that $f(x_0)=4$.  In that case we have $L(v)=\{1,2,3\}$ for
all $v\in W\cup Z$.  We can check whether $G[W\cup Z]$ is
$3$-colorable with the known algorithms \cite{RS,BFGP}.  On the other
hand we have $|L(u)|\le 2$ for all $u\in V_1\cup V_2\cup V_3\cup V_4$,
so checking whether $f$ extends to $V_1\cup V_2\cup V_3\cup V_4$ is a
$2$-list coloring problem.  By items~\ref{lw} and \ref{lz} the two
sets $V_1\cup V_2\cup V_3\cup V_4$ and $W\cup Z$ are anticomplete to
each other, so extending the coloring to them can be done
independently.  \\
Now suppose that $f(x_0)\neq 4$.  Then every vertex in $W$ has a list
of size $2$ (the set $\{1,2,3,4\}\setminus \{4,f(x_0)\}$).  If
$Z_1\neq\emptyset$, we pick a vertex $w^*$ from $W$ as in
item~\ref{lwz1} and add $w^*$ to $P$; moreover, if $w^*$ is not
complete to $Z_1$, we pick one vertex $z^*$ from $N_{Z_1}(w^*)$ and
add $z^*$ to $P$.  It follows from items~\ref{lv5}, \ref{lw},
\ref{lz1} and \ref{lwz1} that every vertex in $G\setminus P$ has a
list of size $2$ (in particular every vertex in $Z_1$ is complete to
either $\{x_0, w^*\}$ or $\{x_0, z^*\}$), so we can finish with a
$2$-list coloring problem.

\medskip

{\it Case 2:  $|f(\{v_1,v_2, v_3,v_4\})|= 2$.} \\
We may assume up to relabeling that $f(v_1)=f(v_3)=1$ and
$f(v_2)=f(v_4)=2$.  \\
Suppose that $V_1$ contains two adjacent vertices $a,b$.  Then
$\{a,b,v_2\}$ is a clique of size $3$.  We add $a,b$ to the set $P$.
By item~\ref{lv5}, in any possible $4$-coloring of $G$ the vertices of
$V_5$ must all have the same color, say color $4$.  In that case we
can argue as in Case~1 and conclude.  The same argument can be applied
if $V_2$ is not a stable set, and by symmetry if $V_3$ or $V_4$ is not
a stable set.  Therefore we may assume that each of $V_1, V_2, V_3,
V_4$ is a stable set.  \\
We have $L(v)=\{3,4\}$ for all $v\in V_5$, and we may assume, up to
symmetry, that $f(x_0)=4$.  So we have $L(v)=\{1,2,3\}$ for all $v\in
W\cup Z_1$ by items~\ref{lw}, \ref{lz}, and \ref{lz1}.  We may assume
that all vertices in $V_1\cup V_3$ receive color~$1$ and all vertices
in $V_2\cup V_4$ receive color~$2$, because the only other vertices
that may receive color $1$ or $2$ are in $W\cup Z_1$ and are
anticomplete to $V_1\cup V_2\cup V_3\cup V_4$.  Therefore we must only
extend the coloring to $V_5\cup W\cup Z_1$.  \\
Since $L(v)=\{3,4\}$ for all $v\in V_5$, the set $V_5$ must be
bipartite, for otherwise the precoloring cannot be extended to $V_5$
and we stop examining it.  So assume that $V_5$ is bipartite.  Let
$D_1,\ldots,D_t$ be the components of $V_5$ of size at least $2$
(which we call the \emph{big} components of $V_5$), if any.  For each
$D_i$, let $A_i,B_i$ be the two stable sets that form a partition of
$D_i$; let $W_{A_i}=\{x\in W\mid$ $x$ has a neighbor in $A_i$ and no
neighbor in $B_i\}$, $W_{B_i}=\{x\in W\mid$ $x$ has a neighbor in
$B_i$ and no neighbor in $A_i\}$, and $W_i=\{x\in W\mid$ $x$ has a
neighbor in each of $A_i$ and $B_i\}$.  We claim that:
\begin{equation}\label{uc}
\longbox{For every big component $D_i$ of $V_5$, each of $A_i$ and
$B_i$ contains a vertex that is complete to $W_i$.}
\end{equation}
Proof: Let $d$ be a vertex in $B_i$ (the proof is similar for $A_i$)
that has the most neighbors in $W_i$, and suppose that there is still
a vertex $u\in W_i$ that is not adjacent to $d$.  By the definition of
$W_i$ vertex $u$ has a neighbor $a$ in $A_i$ and a neighbor $b$ in
$B_i$.  In $D_i$ there is a shortest path $Q$ from $a$ to $b$, of odd
length.  It is easy to see that $D_i$ is $P_3$-connected.  If $a,b$
are adjacent, then Lemma~\ref{lem:P3conn} is contradicted by $D_i$,
$u$ and $x_0$, because $u$ is not adjacent to $d$.  So $a,b$ are not
adjacent, and since $G$ is $P_6$-free we have $Q=a$-$b'$-$a'$-$b$ for
some $a'\in A_i$ and $b'\in B_i$, and $u$ has no neighbor in
$\{a',b'\}$.  Then $d$ is adjacent to $a$, for otherwise
$\{u,a,b',v_1,d\}$ induces a bull, and $d$ is adjacent to $a'$, for
otherwise $\{u,a,d,v_1,a'\}$ induces a bull.  By the choice of $d$
some vertex $v$ in $W_i$ is adjacent to $d$ and not to $b$.  Then $v$
is adjacent to $a$, for otherwise $\{v,d,a,v_1,b\}$ induces a bull,
and to $a'$, for otherwise $\{x_0,v,a,d,a'\}$ induces a bull; but then
$\{x_0,v,d,a',b\}$ induces a bull, a contradiction.  Thus (\ref{uc})
holds.
\begin{equation}\label{wb}
\mbox{For every big component $D_i$ of $V_5$, one of $W_{A_i}$ and
$W_{B_i}$ is empty.}
\end{equation}
Proof: Suppose on the contrary that some vertex $u$ in $W$ has a
neighbor $a$ in $A_i$ and no neighbor in $B_i$ and some vertex $v$ in
$W$ has a neighbor $b$ in $B_i$ and no neighbor in $A_i$.  If $a,b$
are adjacent, then $u,v$ are adjacent, for otherwise $\{u,a,v_1,b,v\}$
induces a bull; but then $\{v_1,a,b,u,v,x_0\}$ induces an $F_5$.
Hence $a,b$ are not adjacent.  Since $G$ is $P_6$-free, $D_i$ contains
a chordless path $a$-$b'$-$a'$-$b$ of length $3$.  Then
$\{u,a,b',v_1,b\}$ induces a bull, a contradiction.  Thus (\ref{wb})
holds.

\medskip

By (\ref{wb}) we may assume that $W_{B_i}=\emptyset$ for every big
component $D_i$ of $V_5$.  For each big component $D_i$ of $V_5$, take
a vertex $d_i$ that is complete to $W_i$, with $d_i\in B_i$, which is
possible by (\ref{uc}); so $d_i$ is anticomplete to $W_{A_i}$.  Let
$T=\{d_1, \ldots, d_t\}$.  Note that $T$ is a stable set.  Let $H=
G[Z_1\cup W\cup T\cup \{v_1,v_2\}]$.  We claim that:
\begin{equation}\label{ft}
\mbox{$f$ extends to a $4$-coloring of $G$ if and only if $H$ is
$3$-colorable.}
\end{equation}
Proof: Suppose that $f$ extends to a $4$-coloring $c$ of $G$.  Clearly
every big component $D_i$ of $V_5$ satisfies either $c(A_i)=4$ and
$c(B_i)=3$ or vice-versa.  If every big component $D_i$ of $V_5$
satisfies $c(A_i)=4$ and $c(B_i)=3$, then the restriction of $c$ to
$H$ is a $3$-coloring, using colors $1,2,3$.  So suppose that some
component $D_i$ satisfies $c(A_i)=3$ and $c(B_i)=4$.  Then we swap
colors $3$ and $4$ on that component, and we claim that the result is
still a proper coloring.  Indeed, vertices in $V_1\cup V_2\cup V_3\cup
V_4$ have color $1$ or $2$; vertices in $W_i$ have a neighbor in each
of $A_i$ and $B_i$, so their color is $1$ or $2$; vertices in
$W_{A_i}$ do not have color $4$ since they are adjacent to $x_0$; and
all other vertices of $G$ are anticomplete to $D_i$, by the definition
of $Z$, $D_i$, $W_{A_i}$, $W_i$ and because $W_{B_i}=\emptyset$.  So
the swap does not cause any two adjacent vertices to have the same
color.  We can repeat this operation for every such component $D_i$;
thus we obtain a $4$-coloring of $G$ whose restriction to $H$ is a
$3$-coloring.  \\
Conversely, suppose that $H$ admits a $3$-coloring $g$, with colors
$1,2,3$.  Up to relabeling we may assume that $g(v_1)=1$ and
$g(v_2)=2$.  It follows that all vertices in $T$ have color $3$.  Then
we extend this coloring to $G$ as follows.  Assign color $1$ to all
vertices in $V_1\cup V_3$ and color $2$ to all vertices in $V_2\cup
V_4$.  For every big component $D_i$ of $V_5$, assign color $3$ to all
vertices in $B_i$ and color $4$ to all vertices in $A_i$.  Also assign
color $4$ to all vertices in $V_5\setminus (D_1\cup\cdots\cup D_t)$
and to $x_0$.  Thus we obtain a proper $4$-coloring $c$ of $G$, and
clearly $c$ is also an extension of $f$.  So (\ref{ft}) holds.

\medskip

By (\ref{ft}) we need only check whether the induced subgraph $H$ is
$3$-colorable, which we can do with the known algorithms
\cite{RS,BFGP}.  This completes the proof of the theorem.
\end{proof}

The time complexity of the coloring algorithm given in
Theorem~\ref{thm:yesgemcol} can be evaluated as follows.  We test only
a fixed number of precolorings, and for each of them we need to solve
either a list-$2$-coloring problem, which takes time $O(n^2)$, or the
problem of $3$-coloring a certain $P_6$-free subgraph of $G$, which
takes time $O(n^3)$ in \cite{BFGP}.  So the complexity is $O(n^3)$.

\medskip

Finally, Theorem~\ref{thm:main} follows from Lemmas~\ref{lem:abc},
\ref{lem:magnet} and~\ref{lem:Fi} and Theorems~\ref{thm:kp6bgf}
and~\ref{thm:yesgemcol}.

\medskip

The complexity of our general algorithm can be evaluated as follows.
Assume that we are given a $(P_6,\mbox{bull})$-free graph $G$ on $n$
vertices.  We first apply the reduction steps described in
Lemma~\ref{lem:abc}; the complexity is $O(n^5)$ as discussed after the
proof of this lemma.  Then we test in time $O(n^5)$ whether $G$
contains a gem.  Suppose that $G$ is gem-free.  Then we test whether
$G$ is perfect, which in this case is equivalent to testing whether
$G$ is $C_5$-free and takes time $O(n^5)$.  If $G$ is perfect, we use
the algorithm from \cite{Penev} to compute the chromatic number of $G$
in time $O(n^6)$.  If $G$ is not perfect, we use the algorithm from
\cite{BKM}, based on the fact that the clique-width is bounded, which
runs in $O(n^2)$.  Finally, if the graph contains a gem, then we
construct in time $O(n^2)$ the partition as in
Theorem~\ref{thm:yesgem} and apply the method described in
Theorem~\ref{thm:yesgemcol}, which takes time $O(n^3)$.  Hence the
overall complexity is $O(n^6)$.

\section{Concluding remarks}

Our algorithm provides a $4$-coloring if the input graph $G$ is indeed
$4$-colorable, and otherwise it stops (with the message that $G$ is
not $4$-colorable).  In the latter case the algorithm does not exhibit
a certificate of non-$4$-colorability, in other words a $5$-critical
subgraph of $G$.  Moreover we do not know if there are only finitely
many $5$-critical $(P_6,\mbox{bull})$-free graphs.  It is an
interesting question to know wether it is possible to produce the list
of all $5$-critical $(P_6,\mbox{bull})$-free graphs.  One obstacle to
obtaining a solution of this question with our method is that when we
reduce the problem to an instance of the list-$2$-coloring problem it
seems difficult to translate a negative answer back in terms of the
presence of a certain subgraph.  The same situation occurs in
\cite{RS} and \cite{BFGP}, and the list of all $4$-critical $P_6$-free
graphs was determined only in \cite{CGSZ} (see also
\cite{BHS,HMRSV,MM} for critical $P_5$-free graphs).  Maybe the new
method described in \cite{GS} could help determine the list of
$5$-critical $(P_6,\mbox{bull})$-free graphs.


\end{document}